\newtheorem{theorem}{Theorem}
\newtheorem{lemma}[theorem]{Lemma}
\newtheorem{corollary}[theorem]{Corollary}
\theoremstyle{definition}
\newtheorem{remark}[theorem]{Remark}
\renewcommand{\le}{\leqslant}
\renewcommand{\ge}{\geqslant}
\DeclareMathOperator\diag{diag}
\DeclareMathOperator\Trace{trace}
\title{$\phantom{.}$\\[-30mm] Criterion of unitary similarity for upper triangular matrices in general position}
\author{Douglas Farenick\thanks{Department of Mathematics and Statistics,
University of Regina, Regina, Saskatchewan S4S 0A2, Canada. Email:
\mbox{Doug.Farenick@uregina.ca}.
Supported in part by NSERC.}
\and
Vyacheslav Futorny\thanks{Department of Mathematics, University of S\~ao Paulo, S\~ao Paulo, Brazil. Email: \mbox{futorny@ime.usp.br}. Supported  in part by the CNPq grant (301743/2007-0) and by the Fapesp grant (2010/50347-9).}
\and
Tatiana G.~Gerasimova\thanks{Faculty of Mechanics and Mathematics, Kiev National Taras Shevchenko University, Volodymyrska 64, Kiev, Ukraine. Emails: \mbox{gerasimova@imath.kiev.ua} (T.G.~Gerasimova) and \mbox{nadiia.shvai@gmail.com} (N.~Shvai).
The work was started while these authors were visiting the University of Regina supported by NSERC, the University of Regina Visiting Graduate Student Research Program, and the Svjatoslav Vakarchuk Foundation ``People of Future''.}
\and
Vladimir V. Sergeichuk\thanks{Institute of Mathematics, Tereshchenkivska 3, Kiev, Ukraine. Email: \mbox{sergeich@imath.kiev.ua}.
Supported in part by the Fapesp grant
(2010/07278-6).}
 \and
Nadya Shvai$^{\ddag}$}
\date{Dedicated to the Memory of Aleksandr Shvai, the husband of Nadya Shvai, who died tragically at age 23}
\begin{document}

\maketitle

\begin{abstract}
Each square complex matrix is
unitarily similar to an upper
triangular matrix with diagonal entries in any prescribed order. Let $A=[a_{ij}]$ and $B=[b_{ij}]$ be upper triangular $n\times n$ matrices that
\begin{itemize}
  \item are not similar to direct sums of matrices of smaller sizes, or
  \item are
in general position and have the same main diagonal.
\end{itemize}
We prove that
$A$ and $B$ are unitarily similar if
and only if
\[\|h(A_k)\|=\|h(B_k)\|\qquad \text{for all $h\in\mathbb
C[x]$  and $k=1,\dots,n$,}\]
where $A_k:=[a_{ij}]_{i,j=1}^k$ and $B_k:=[b_{ij}]_{i,j=1}^k$ are the
principal $k\times k$ submatrices
of $A$ and $B$ and $\|\cdot\|$ is
the Frobenius norm.

{\it MSC2010:}
15A21; 15A60

{\it Keywords:}
Unitary similarity; Classification; General position; Frobenius norm
\end{abstract}

%%%%%%%%%%%%%%%%%%%%%%%%%%%%

\section{Introduction}\label{ore}

A classical problem in linear algebra is the following one: if $A$ and $B$ are square complex matrices, then how can one determine whether $A$ and $B$
are unitarily similar (i.e., $U^{-1}AU=B$ for a unitary $U$)? More precisely, which invariants completely determine a matrix up to unitary similarity?

Let us recall the most known solutions to this problem:
\begin{description}
  \item[Specht's theorem.] \emph{Matrices $A$ and $B$ are unitarily similar if and only if
\[
\Trace\, \omega(A,A^*)=
\Trace\, \omega(B,B^*)
\]
for all words $\omega$ in two noncommuting variables,} see \cite{spe}.

 \item[Littlewood's canonical matrices.] Littlewood \cite{lit} constructed an algorithm that reduces each square complex matrix $A$ by transformations of unitary similarity to some matrix $A_{\text{can}}$ in such a way that $A$ and $B$ are unitarily similar if and only if they are reduced to the same matrix $A_{\text{can}}=B_{\text{can}}$. Thus, the matrices that are not changed by Littlewood's algorithm are \emph{canonical  with respect to unitary similarity}. We use  Littlewood's canonical matrices in this paper (see Remark \ref{kme}).  Systems of linear mappings on unitary and Euclidean spaces (i.e., unitary and Euclidean representations of quivers) were studied in \cite{ser_unit} using Littlewood's algorithm.

 \item[Arveson's
criterion.] Let $A$ and $B$ be $n\times n$ complex matrices such that each of them is not unitarily similar to a direct sum of square matrices of smaller sizes. Arveson \cite[Theorems 2 and 3]{arv0} proved that \emph{$A$ and $B$ are unitarily similar if and only if
\begin{equation}\label{feu}
\|H_0\otimes I_n+H_1\otimes A\|_{op}=\|H_0\otimes I_n+H_1\otimes B\|_{op}
\end{equation}
for all $H_0,H_1\in \mathbb C^{n\times n}$, where $\|M\|_{op}:=\max_{|v|=1}|Mv|$ is the operator norm and $|\cdot|$
stands for the Euclidean norm of vectors.}

For each matrix polynomial  \[H(x) = H_0  +H_1x+\dots+  H_tx^t\in\mathbb C^{k\times k}[x],\] whose coefficients $H_i$ are $k\times k$  matrices, we define its value at an $n\times n$ matrix $M$ as follows:
\[H(M):=H_0\otimes I_n  +H_1\otimes M+\dots+  H_t\otimes M^t\in \mathbb C^{kn\times kn}.\]
The condition \eqref{feu} means that
\begin{equation}\label{dwl}
\|H(A)\|_{op}=\|H( B)\|_{op}
\end{equation}
for all matrix polynomials
$H\in\mathbb C^{n\times n}[x]$ of degree at most $1$.
For some class of operators on a Hilbert space, Arveson \cite[Theorem 2.3.2]{arv2} proved that two operators $A$ and $B$ are unitarily similar if and only if the condition \eqref{dwl} holds for all (possibly, nonlinear)  $H\in\mathbb C^{k\times k}[x]$.
\end{description}

The purpose of this paper is to give a criterion of unitary similarity of matrices that is analogous to Arveson's criterion \eqref{dwl}, but in which polynomials over $\mathbb C$ are used instead of linear polynomials over $\mathbb C^{n\times n}$. All matrices that we consider are complex matrices.

We study only the finite dimensional case, and so we can and will use the \emph{Frobenius norm}
\begin{equation*}\label{epk}
\|A\|:=\sqrt{\sum
|a_{ij}|^2}, \qquad\text{where } A=[a_{ij}]\in \mathbb C^{n\times n},
\end{equation*}
instead of the operator norm.
The \emph{Frobenius norm of a linear operator} on a unitary space is the Frobenius norm of its matrix in any orthonormal basis. This definition is correct since the Frobenius norm of a matrix does
not change under multiplication by unitary matrices. Hence, if $A$ and $B$ are unitarily similar matrices, then $\|A\|=\|B\|$; moreover,
\begin{equation}\label{blr}
\|h(A)\|=\|h( B)\|\qquad \text{for all
$h\in\mathbb C[x]$}.
\end{equation}

The converse statement is not true;
the condition \eqref{blr}
does not ensure the unitary similarity of matrices:
\begin{equation}\label{usp}
A=\begin{bmatrix}
    0&1&0\\0&0&2\\0&0&0
  \end{bmatrix},\qquad B=\begin{bmatrix}
    0&2&0\\0&0&1\\0&0&0
  \end{bmatrix}
\end{equation}
are not unitarily similar and
satisfy \eqref{blr}; see Lemma \ref{krd}. But their $2\times 2$ principal submatrices
\[
A_2=\begin{bmatrix}
    0&1\\0&0
  \end{bmatrix},\qquad B_2=\begin{bmatrix}
    0&2\\0&0
  \end{bmatrix}
\]
do not satisfy \eqref{blr}. (By the $k\times k$ \emph{principal submatrix} $M_k$ of a matrix $M$, we mean the submatrix at the intersection of the first $k$ rows and the first $k$ columns.)
For this reason, we give a criterion of unitary similarity, in which the condition \eqref{blr} is imposed not only on $n\times n$ matrices $A$ and $B$, but also on their principal submatrices:
\begin{equation}\label{blh6}
\|h(A_k)\|=\|h(B_k)\|\qquad
\text{for all }h\in\mathbb
C[x]\text{ and }k=1,\dots,n.
\end{equation}

We prove that the condition \eqref{blh6} ensures the unitary similarity of \emph{upper triangular} $n\times n$ matrices $A$ and $B$ in two cases:
\begin{itemize}
  \item if $A$ and $B$ are not similar to direct sums of square matrices of smaller sizes (Theorem \ref{jom}), and

  \item if $A$ and $B$ are in general position (Theorem \ref{jir}).
\end{itemize}

We consider only upper triangular matrices because of the Schur unitary triangularization theorem \cite[Theorem 2.3.1]{HJ1}:
\emph{every square matrix $A$ is unitarily similar to an upper triangular matrix $B$ whose diagonal entries are complex numbers in any prescribed order}; say, in the lexicographical order:
\begin{equation}\label{gep}
\text{$a+bi\preccurlyeq c+di$\qquad if either $a<c$, or $a=c$ and $b\le d$.}
\end{equation}
A unitary matrix $U$ that transforms $A$ to $B=U^{-1}AU$  is easily constructed: we reduce $A$ by similarity transformations to an upper triangular matrix $S^{-1}AS$ with diagonal entries in the prescribed order (this matrix can be obtained from the Jordan form of $A$ by simultaneous permutations of rows and columns), then apply the Gram-Schmidt orthogonalization to the columns of $S$ and obtain a desired unitary matrix $U=ST$, where $T$ is upper triangular.

\section{Main results}\label{ooe}

\subsection{Criterion for indecomposable matrices and unicellular operators}

We say that a matrix is \emph{indecomposable for similarity} if it is not similar to a direct sum of square matrices of smaller sizes. This means that the matrix is similar to a Jordan block. Thus, a matrix is indecomposable with respect to similarity if and only if it is unitarily similar to a matrix of the form
\begin{equation}\label{jys1}
A=\begin{bmatrix}
     \lambda &a_{12}&\dots&a_{1n} \\
     &\lambda&\ddots&\vdots \\
     &&\ddots&a_{n-1,n}\\ 0&&&\lambda
   \end{bmatrix},\qquad \text{all }
     a_{i,i+1}\ne 0.
\end{equation}

In Section \ref{s2} we prove the following theorem, which is the first main result of the paper.

\begin{theorem}\label{jom}
Let $A$ and $B$ be $n\times n$ upper triangular matrices that
are indecomposable with respect to similarity. Then $A$ and $B$ are
unitarily similar if and only if
\begin{equation}\label{blh1}
\|h(A_k)\|=\|h(B_k)\|\qquad
\text{for all }h\in\mathbb
C[x]\text{ and }k=1,\dots,n,
\end{equation}
where $A_k$ and $B_k$ are the
principal $k\times k$ submatrices
of $A$ and $B$.
\end{theorem}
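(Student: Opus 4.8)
The plan is to prove both directions through a single structural observation: for a matrix of the form \eqref{jys1} the standard flag is \emph{intrinsic}, which forces every unitary similarity between two such matrices to be diagonal. I begin with the preliminaries. Since $A$ is upper triangular and indecomposable with respect to similarity, it is similar to a single Jordan block; hence all its diagonal entries equal one scalar $\lambda$, and $(A-\lambda I)^{n-1}\ne0$ forces every superdiagonal entry $a_{i,i+1}$ to be nonzero. Thus $A$ has the form \eqref{jys1}, as does $B$ (with its own $\lambda$), and the same description applies to each principal submatrix $A_k$, which is therefore again indecomposable upper triangular of the same form; this is what will let me induct on $n$. I would also record that, because $A$ is upper triangular, each span of $e_1,\dots,e_k$ is $A$-invariant, so $A_k$ is the restriction of $A$ to it; consequently $h(A_k)=[h(A)]_k$ for every $h\in\mathbb C[x]$, and $\|h(A_k)\|^2=\sum_{i,j\le k}|[h(A)]_{ij}|^2$.

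The key lemma is: if $A,B$ have the form \eqref{jys1} with the \emph{same} $\lambda$ and $U^{*}AU=B$ for a unitary $U$, then $U$ is diagonal. Indeed, for a single Jordan block the span of $e_1,\dots,e_k$ equals exactly $\Ker(A-\lambda I)^k=\Ker(B-\lambda I)^k$. Since $U^{*}(A-\lambda I)^kU=(B-\lambda I)^k$, the matrix $U^{*}$ carries each $\Ker(A-\lambda I)^k$ onto $\Ker(B-\lambda I)^k$, i.e. it preserves the standard flag and so is upper triangular; an upper triangular unitary is diagonal, hence $U$ is diagonal. Granting this, the forward implication is immediate: writing $B=D^{*}AD$ with $D=\diag(d_1,\dots,d_n)$ unitary, the top-left $k\times k$ block gives $B_k=D_k^{*}A_kD_k$, so $\|h(B_k)\|=\|D_k^{*}h(A_k)D_k\|=\|h(A_k)\|$ for all $h$ and $k$.

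For the converse I argue by induction on $n$. The case $k=1$ of \eqref{blh1} forces $\lambda_A=\lambda_B=:\lambda$. Putting $N:=A-\lambda I$ and expanding $h(A_k)=\sum_{m\ge0}c_mN_k^m$ with $c_m=h^{(m)}(\lambda)/m!$, and letting $(c_0,\dots,c_{n-1})$ range over $\mathbb C^n$, the identity $\|h(A_k)\|^2=\sum_{m,m'}c_m\bar c_{m'}\langle N_k^m,N_k^{m'}\rangle$ shows that \eqref{blh1} is equivalent to the equality, for $A$ and $B$ and each $k=1,\dots,n$, of all Gram matrices $G^{(k)}_{mm'}:=\langle N_k^m,N_k^{m'}\rangle$ under the Frobenius inner product. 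By the inductive hypothesis applied to $A_{n-1},B_{n-1}$ (indecomposable upper triangular and satisfying \eqref{blh1} for $k\le n-1$) these are unitarily similar, so by the key lemma a diagonal unitary conjugates one to the other; extending it by $1$ in the last coordinate and replacing $A$ accordingly, I may assume $A_{n-1}=B_{n-1}=:C$. Writing $A=\left[\begin{smallmatrix}C&a\\0&\lambda\end{smallmatrix}\right]$, $B=\left[\begin{smallmatrix}C&b\\0&\lambda\end{smallmatrix}\right]$ and $\hat N:=C-\lambda I$, the level-$(n-1)$ parts of the Gram matrices now cancel, and equality of the level-$n$ Gram matrices reduces to $\langle\hat N^{p}a,\hat N^{q}a\rangle=\langle\hat N^{p}b,\hat N^{q}b\rangle$ for all $p,q\ge0$. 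Since $a_{n-1,n}\ne0$, the vector $a$ is cyclic for the single Jordan block $\hat N$, and equality of these Gram matrices produces a unitary intertwining the bases $\{\hat N^pa\}$ and $\{\hat N^pb\}$ that commutes with $\hat N$; as the unitary commutant of a single Jordan block consists only of scalars, $b=\mu a$ with $|\mu|=1$. Hence $B=D^{*}AD$ for $D=\diag(1,\dots,1,\mu)$, and $A$ and $B$ are unitarily similar.

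The main obstacle is the key lemma: it is exactly the step that collapses the whole problem to diagonal unitary similarity. Once it is established, the forward direction is free and the converse becomes a clean induction along the flag, the only remaining care being the bookkeeping that isolates the last-column contribution to the Gram data and the observation that cyclicity forces the relevant intertwining unitary to be scalar.
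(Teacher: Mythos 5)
Your proposal is correct, and it reaches the theorem by a genuinely different route from the paper's. The paper first normalizes $A$ and $B$ to Littlewood's canonical form with positive real superdiagonal entries (Lemma \ref{oiy}(a)), proves that unitarily similar matrices in this form are equal (Lemma \ref{oiy}(b), by equating entries of $AU=UB$ diagonal by diagonal), and then shows in Lemma \ref{ajs} that such a matrix is \emph{explicitly reconstructible} from the family $\{\|h(A_k)\|\}$: one recovers $\lambda$, then $a_{12}$ from $\|B_2\|$, then $a_{n-1,n}$ from $\|B^{n-1}\|$, and then each $a_{rn}$ from the minimizer of $\|B^{r}-\alpha B^{n-1}\|$. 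You instead avoid the normalization entirely: your key lemma (any unitary intertwiner of two matrices of the form \eqref{jys1} with the same eigenvalue preserves the flag $\Ker(A-\lambda I)^k$ and is therefore diagonal) gives the forward implication directly, since a diagonal unitary restricts to the principal submatrices; and your converse replaces the entry-by-entry reconstruction by a polarization step turning \eqref{blh1} into equality of the Gram matrices $\langle N_k^m,N_k^{m'}\rangle$, followed by an induction that peels off the last column and exploits the cyclicity of that column for $\hat N$ together with the fact that a unitary in the commutant of a nonderogatory nilpotent matrix is scalar. Both arguments hinge on the same rigidity (intertwining unitaries are diagonal/scalar), but the paper's version is constructive --- it yields an algorithm recovering the entries and simultaneously exhibits the canonical form used in Remark \ref{kme} --- whereas yours is more structural, trades the explicit formulas for Gram-matrix and commutant arguments, and establishes unitary similarity without ever producing a normal form. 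I see no gap: the well-definedness and unitarity of the intertwiner $V$ from equal Gram matrices, the identity $V\hat N=\hat N V$, and the reduction of the level-$n$ Gram data to $\langle\hat N^{p}a,\hat N^{q}a\rangle=\langle\hat N^{p}b,\hat N^{q}b\rangle$ all check out.
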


Now we give the operator form of this criterion. Two operators $\cal A$ and $\cal B$ on a unitary space are \emph{unitarily similar} if there exists a unitary operator ${\cal U}$ such that ${\cal U}^{-1}{\cal A}{\cal U}={\cal B}$.
A linear operator ${\cal A}:U\to U$ on an $n$-dimensional unitary space $U$ is said to be \emph{unicellular} if it satisfies one of the following 3 equivalent conditions:
\begin{itemize}
  \item its matrix is indecomposable with respect to similarity;
  \item there exist no invariant subspaces $U'$ and $U''$ of $\cal A$ such that \[\dim U'+\dim U''=n,\qquad U'\cap U''=0;\]
  \item
all invariant subspaces of $\cal A$ form the chain
\begin{equation*}\label{ees}
0 \subset U_1 \subset U_2 \subset \ldots \subset U_n=U,\qquad \dim U_i=i\text{ for all }i.
\end{equation*}
\end{itemize}

\begin{corollary}\label{ttt1}
{\rm(a)} Let $\cal A$ and $\cal B$ be unicellular linear operators on an $n$-dimensional unitary space $U$ with the chains of invariant subspaces
\[
0 \subset U_1 \subset U_2 \subset \ldots \subset U_n=U,\qquad 0\subset V_1\subset V_2\subset\ldots\subset V_n=U,
\]
and let \[{\cal A}_i:={\cal A}|U_k,\qquad {\cal B}_k:={\cal B}|V_k
\] be the
restrictions of $\cal A$ and $\cal B$ to their invariant subspaces.
Then $\cal A$ and $\cal B$
are unitarily similar if and only if
\begin{equation}\label{bld}
\|h({\cal
A}_k)\|=\|h({\cal B}_k)\|\qquad
\text{for all }h\in\mathbb
C[x]\text{ and }k=1,\dots,n.
\end{equation}

{\rm(b)} In particular, two nilpotent linear operators $\cal A$ and ${\cal B}$ of rank $n-1$ on an $n$-dimensional unitary space are unitarily similar if and only if \eqref{bld} holds for the restrictions ${\cal A}_k$ and ${\cal B}_k$ of the operators to the images of ${\cal A}^k$ and ${\cal B}^k$.
\end{corollary}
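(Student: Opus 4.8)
The plan is to deduce the corollary directly from Theorem~\ref{jom} by transcribing everything from the language of operators into the language of matrices, using orthonormal bases adapted to the two chains. For part (a) I would first choose an orthonormal basis $e_1,\dots,e_n$ of $U$ with $U_k=\operatorname{span}(e_1,\dots,e_k)$ for every $k$ (apply Gram--Schmidt to any basis subordinate to the chain), and likewise an orthonormal basis $f_1,\dots,f_n$ with $V_k=\operatorname{span}(f_1,\dots,f_k)$. Because each $U_k$ is $\mathcal A$-invariant, the matrix $A=[a_{ij}]$ of $\mathcal A$ in the basis $e_1,\dots,e_n$ is upper triangular, and similarly the matrix $B$ of $\mathcal B$ in $f_1,\dots,f_n$ is upper triangular. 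Since $\mathcal A$ and $\mathcal B$ are unicellular, these matrices are indecomposable with respect to similarity (the first of the three equivalent conditions), so Theorem~\ref{jom} applies to the pair $A,B$.

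The step that must be checked carefully is that the invariants on the two sides coincide, and this is exactly where invariance of the chain members is used. Because $U_k$ is $\mathcal A$-invariant and equals $\operatorname{span}(e_1,\dots,e_k)$, the matrix of $\mathcal A$ is block upper triangular with leading $k\times k$ block equal to the principal submatrix $A_k$, and that block is precisely the matrix of the restriction $\mathcal A_k=\mathcal A|U_k$ in the orthonormal basis $e_1,\dots,e_k$ of $U_k$. Since the Frobenius norm of an operator is the Frobenius norm of its matrix in any orthonormal basis, it follows that $\|h(\mathcal A_k)\|=\|h(A_k)\|$ and $\|h(\mathcal B_k)\|=\|h(B_k)\|$ for all $h\in\mathbb C[x]$, so condition~\eqref{bld} is literally condition~\eqref{blh1} for $A$ and $B$. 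Finally, operators on $U$ are unitarily similar exactly when their matrices in orthonormal bases are unitarily similar, so Theorem~\ref{jom} yields that $\mathcal A$ and $\mathcal B$ are unitarily similar if and only if \eqref{bld} holds, which proves (a).

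For part (b) I would observe that a nilpotent operator $\mathcal A$ of rank $n-1$ on an $n$-dimensional space is a single nilpotent Jordan block, hence unicellular, and that its unique chain of invariant subspaces can be described through the images of its powers: $\operatorname{im}\mathcal A^{m}$ is $\mathcal A$-invariant of dimension $n-m$, and these subspaces form the chain $0\subset\operatorname{im}\mathcal A^{n-1}\subset\dots\subset\operatorname{im}\mathcal A\subset\operatorname{im}\mathcal A^{0}=U$. Thus (b) is the special case of (a) in which the chains are taken to be the images of powers of $\mathcal A$ and $\mathcal B$; the only thing to verify is that the restrictions to the images of $\mathcal A^{m}$ run over exactly the operators $\mathcal A_k$ of (a), after the harmless reindexing that matches $\operatorname{im}\mathcal A^{m}$ with the dimension-$(n-m)$ member of the chain.

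The main obstacle I anticipate is not conceptual but a matter of correct bookkeeping: ensuring that \emph{principal submatrix} on the matrix side corresponds to \emph{restriction to an invariant subspace} on the operator side, and not to a mere compression---an identification that holds precisely because the chain members are invariant---and keeping track of the reindexing between the increasing chain of (a) and the decreasing chain of images in (b).
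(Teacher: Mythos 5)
Your argument is correct and is exactly the intended one: the paper states Corollary~\ref{ttt1} without an explicit proof precisely because it follows from Theorem~\ref{jom} by choosing orthonormal bases adapted to the two chains (so that restrictions to invariant subspaces become principal submatrices of upper triangular matrices), which is what you do. Your handling of part (b), identifying the images of the powers with the members of the unique invariant chain after reindexing, is also the intended reduction to part (a).
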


Let \eqref{bld} hold. Then $\cal A$ and $\cal B$ have the same eigenvalue: if $\lambda$ is the eigenvalue of $\cal A$ and $h(x):=(x-\lambda)^n$, then $\|h({\cal
B})\|=\|h({\cal A})\|=0$ and so $\lambda $ is the eigenvalue of $\cal B$. Hence, the canonical
isomorphism of one-generated
algebras
\begin{equation}\label{hkt}
{\mathbb C}[{\cal A}_k]\simeq
{\mathbb C}[{\cal B}_k],\qquad
{\cal A}_k\mapsto {\cal B}_k
\end{equation}
is defined correctly: the algebras are isomorphic to ${\mathbb C}[x]/(x-\lambda)^k{\mathbb C}[x]$.

\begin{corollary}\label{qj2}
Two unicellular linear operators $\cal A$ and $\cal B$ on an $n$-dimensional unitary space are unitarily similar if and only if they have the same eigenvalue and the canonical isomorphism \eqref{hkt} is isometric $($i.e., it preserves the norm$)$ for each
$k=1,\dots,n$.
\end{corollary}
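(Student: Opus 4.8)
The plan is to read Corollary \ref{qj2} off from Corollary \ref{ttt1}(a) by rephrasing its single hypothesis \eqref{bld} in the two-clause form of the present statement. Concretely, I would prove that \eqref{bld} holds if and only if $\cal A$ and $\cal B$ have the same eigenvalue and each canonical isomorphism \eqref{hkt} is isometric; granting this equivalence, Corollary \ref{ttt1}(a) immediately yields the desired characterisation of unitary similarity.

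First I would record the algebraic picture underlying \eqref{hkt}. Since $\cal A$ is unicellular, $U_k$ is the unique $k$-dimensional invariant subspace and the restriction ${\cal A}_k={\cal A}|U_k$ is itself unicellular, hence a single Jordan block of size $k$ with the same eigenvalue $\lambda$ as $\cal A$; thus its minimal polynomial is $(x-\lambda)^k$ and ${\mathbb C}[{\cal A}_k]\simeq{\mathbb C}[x]/(x-\lambda)^k{\mathbb C}[x]$, and similarly for ${\cal B}_k$. The point I want to extract is twofold: every element of ${\mathbb C}[{\cal A}_k]$ has the form $h({\cal A}_k)$ for some $h\in{\mathbb C}[x]$, and, once the eigenvalues agree, the map \eqref{hkt} is well defined and sends $h({\cal A}_k)\mapsto h({\cal B}_k)$, exactly as observed in the paragraph preceding the statement.

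With these facts the equivalence is routine. For the forward direction, if \eqref{bld} holds then, taking $h(x)=(x-\lambda)^n$ and $k=n$, we get $\|h({\cal B})\|=\|h({\cal A})\|=0$, which forces the eigenvalue of $\cal B$ to be $\lambda$; with the eigenvalues equal the isomorphism \eqref{hkt} is defined, and since it carries $h({\cal A}_k)$ to $h({\cal B}_k)$ the equalities $\|h({\cal A}_k)\|=\|h({\cal B}_k)\|$ say precisely that it preserves the Frobenius norm of every element, i.e. it is isometric for each $k$. Conversely, if the eigenvalues agree and each \eqref{hkt} is isometric, then evaluating the isometry on the arbitrary element $h({\cal A}_k)$ recovers $\|h({\cal A}_k)\|=\|h({\cal B}_k)\|$ for all $h$ and all $k$, which is \eqref{bld}. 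Combining this with Corollary \ref{ttt1}(a) finishes the proof.

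Because the mathematical substance is already supplied by Theorem \ref{jom} and Corollary \ref{ttt1}, I do not expect a genuine obstacle here; the only care needed is the bookkeeping that the isometry of the algebra isomorphism between ${\mathbb C}[{\cal A}_k]$ and ${\mathbb C}[{\cal B}_k]$ is the same assertion as the family of scalar equalities \eqref{bld}, the norm in both cases being the Frobenius norm inherited from the operators on the invariant subspaces.
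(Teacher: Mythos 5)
Your proposal is correct and follows essentially the same route the paper takes: the paragraph preceding the statement already establishes that \eqref{bld} forces the eigenvalues to agree and makes \eqref{hkt} well defined, and the corollary is then just the observation that isometry of \eqref{hkt} for each $k$ is a restatement of \eqref{bld}, after which Corollary \ref{ttt1}(a) applies. Your bookkeeping of why every element of ${\mathbb C}[{\cal A}_k]$ is of the form $h({\cal A}_k)$ and why the isomorphism carries $h({\cal A}_k)$ to $h({\cal B}_k)$ is exactly the content the paper leaves implicit.
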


\subsection{Criterion for matrices in general position}

Theorem \ref{jom} is not extended to matrices with several eigenvalues:
we prove in Lemma \ref{lor} that each two matrices of the form \begin{equation}\label{dft}
A:=\begin{bmatrix}
  0&1&-1&a\\0&1&1&1\\0&0&2&1\\0&0&0&3
\end{bmatrix},\qquad B:=\begin{bmatrix}
  0&1&-1&b\\0&1&1&1\\0&0&2&1\\0&0&0&3
\end{bmatrix},\quad \begin{matrix}a\ne b,\\ |a|=|b|=1,
\end{matrix}
\end{equation}
are not unitarily similar but satisfy \eqref{blh1}.
Nevertheless, in this section we extend Theorem \ref{jom} to ``almost all'' upper triangular matrices as follows.

Let \begin{equation}\label{gree}
X_n:=\begin{bmatrix}
     x_{11} &\dots&x_{1n} \\
     &\ddots&\vdots \\ 0&&x_{nn}
   \end{bmatrix}
\end{equation}
be a matrix whose upper triangular entries are variables;
denote by $\mathbb C[x_{ij}|i\le j\le n]$ the set of
polynomials in these variables.
For simplicity of notation, we write $f\{X_n\}$ instead of $f(x_{11},x_{12},x_{22},\dots)$.

For each $f\in
\mathbb C[x_{ij}|i\le j\le
n]$, write
\begin{equation}\label{wmo}
M_n(f):=\{A\in \mathbb C^{n\times n}\,|\, A\text{ is upper triangular and }f\{A\}\ne 0\}.
\end{equation}

For example, if
\begin{equation}\label{kut}
\varphi_n\{X_n\}:=x_{12}x_{23}\cdots
x_{n-1,n}\prod_{i<
j}(x_{ii}-x_{jj}),
\end{equation}
then $M_n(\varphi_n)$ consists of
matrices of the form \begin{equation}\label{jys}
\begin{bmatrix}
     \lambda_{1} &a_{12}&\dots&a_{1n} \\
     &\lambda _2&\ddots&\vdots \\
     &&\ddots&a_{n-1,n}\\ 0&&&\lambda _{n}
   \end{bmatrix},\qquad \begin{matrix}
     \lambda _i\ne \lambda _j\text{ if }i\ne j,\\
     \text{all }a_{i,i+1}\ne 0.
   \end{matrix}
\end{equation}

We say that $n \times n$ upper triangular matrices \emph{in general position} possess some property
if there exists a nonzero polynomial $f_n\in
\mathbb C[x_{ij}|i\le j\le
n]$ such that all matrices in $M_n(f_n)$ possess this property. Thus, this property holds for all matrices in $\mathbb C^{n\times n}$ except for matrices from an algebraic variety of smaller dimension.\footnote{In algebraic
geometry, when a family of objects
$\{X_p\}_{p\in\Sigma}$ is
parametrized by the points of an
irreducible algebraic variety
$\Sigma$, the statement that ``the
general object $X$ has a property
$\cal P$" is taken to mean that
``the subset of points
$p\in\Sigma$ such that the
corresponding object $X_p$ has the
property $\cal P$ contains a
Zariski open dense subset of
$\Sigma$'', see \cite[p.\,54]{har}.}

The second main result of the paper is the following theorem.

\begin{theorem}\label{jir}
Two $n\times n$ upper triangular matrices $A$ and $B$ in general position with lexicographically ordered eigenvalues on the main diagonal $($see \eqref{gep}$)$ are
unitarily similar if and only if \begin{equation}\label{blh}
\|h(A_k)\|=\|h(B_k)\|\qquad
\text{for all }h\in\mathbb
C[x]\text{ and }k=1,\dots,n,
\end{equation}
where $A_k$ and $B_k$ are the
principal $k\times k$ submatrices
of $A$ and $B$.
\end{theorem}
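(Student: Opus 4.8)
The forward implication is immediate, since unitary similarity preserves the Frobenius norm and carries the common invariant flag (hence the principal submatrices) along, so \eqref{blh} holds. The content is the converse, and the plan is to show that \eqref{blh} reconstructs a complete unitary invariant of $A$. First I would linearize the data: writing $h=\sum_p c_px^p$ and expanding $\|h(A_k)\|^2=\Trace\big(h(A_k)h(A_k)^*\big)$, condition \eqref{blh} is equivalent, by polarization in the $c_p$, to the trace identities $\Trace\big(A_k^{\,p}(A_k^*)^q\big)=\Trace\big(B_k^{\,p}(B_k^*)^q\big)$ for all $p,q\ge0$ and $1\le k\le n$. Taking $q=0$ gives $\Trace(A_k^{\,p})=\Trace(B_k^{\,p})$ for all $p$, so $A_k$ and $B_k$ have the same eigenvalues for every $k$; peeling these off for $k=1,2,\dots$ and using that in general position the diagonal entries are distinct and \eqref{gep}-ordered, I conclude that $A$ and $B$ share the main diagonal $\diag(\lambda_1,\dots,\lambda_n)$ with $\lambda_i\ne\lambda_j$.

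Next I pass to eigenprojections. Since $A_k$ has the distinct eigenvalues $\lambda_1,\dots,\lambda_k$, write $A_k=\sum_{i\le k}\lambda_iP_i^{(k)}$ with rank-one idempotents $P_i^{(k)}$, so that $\Trace(A_k^{\,p}(A_k^*)^q)=\sum_{i,j\le k}\lambda_i^{\,p}\bar\lambda_j^{\,q}\,\Trace\big(P_i^{(k)}P_j^{(k)*}\big)$; the Vandermonde independence of $(\lambda_i^{\,p})_p$ then lets me isolate each coefficient, so $\Trace(P_i^{(k)}P_j^{(k)*})$ is itself an invariant, equal for $A$ and $B$ for all $i,j\le k\le n$. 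Because $A$ is upper triangular with distinct diagonal, its $\lambda_i$-eigenvector $v_i$ lies in $\langle e_1,\dots,e_i\rangle$ but not in $\langle e_1,\dots,e_{i-1}\rangle$, so the eigenvectors of $A_k$ are exactly $v_1,\dots,v_k$. Normalizing $\|v_i\|=1$ and letting $G=[\langle v_i,v_j\rangle]$ be the eigenvector Gram matrix with principal blocks $G_k$, a direct computation gives $\Trace(P_i^{(k)}P_j^{(k)*})=(G_k^{-1})_{ij}\,\overline{(G_k)_{ij}}$. Thus \eqref{blh} is equivalent to the equality of these quantities for the Gram matrices $G^A$ and $G^B$.

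A polar-decomposition argument shows that two matrices with equal eigenvalues are unitarily similar if and only if their normalized eigenvector Gram matrices coincide up to the gauge $G_{ij}\mapsto c_i\bar c_jG_{ij}$ with $|c_i|=1$. I now induct on $n$. The conditions for $k\le n-1$ say, by the inductive hypothesis applied to the principal submatrices (which inherit general position), that the blocks $G^A_{n-1}$ and $G^B_{n-1}$ are gauge-equivalent; normalizing the first $n-1$ eigenvectors, assume $G^A_{n-1}=G^B_{n-1}=:G_0$. It remains to recover the last column $g=(\langle v_i,v_n\rangle)_{i<n}$ up to a single global phase, which is the only residual gauge once $G_0$ is generic (pairwise non-orthogonal, so that the unitary commutant of $A_{n-1}$ is scalar). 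By the block-inverse formula the surviving $k=n$ invariants read $\Trace(P_n^{(n)}P_n^{(n)*})=1/s$ with $s=1-g^*G_0^{-1}g$ and, for $i<n$,
\[
\Trace\big(P_i^{(n)}P_n^{(n)*}\big)=-\tfrac1s\,(G_0^{-1}g)_i\,\overline{g_i}\,.
\]
Setting $u=G_0^{-1}g$ and $M=uu^*$, these identities determine $s$ and the diagonal of $MG_0$, so the problem reduces to recovering the rank-one positive semidefinite matrix $M=uu^*$ (equivalently $g$, up to phase) from $\diag(MG_0)$.

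This last reconstruction is the main obstacle, and it genuinely fails in general: the pair \eqref{dft} gives non-unitarily-similar matrices satisfying \eqref{blh}, so the generic fiber can only be controlled after excluding an exceptional locus. The saving feature is dimensional: modulo the global phase, $u$ ranges over a real $(2(n-1)-1)$-dimensional space, strictly smaller than the target dimension $2(n-1)$, so the map $u\mapsto\diag(uu^*G_0)$ is generically finite, and I must show its generic degree is one. I plan to do this by verifying that its differential is injective modulo the phase direction off the vanishing of a suitable minor, and that the algebraic locus where two distinct rank-one solutions collide is a proper subvariety; intersecting these conditions with $\{\varphi_n\ne0\}$ from \eqref{kut} (distinct eigenvalues, nonzero superdiagonal) and with the non-orthogonality required above yields a single nonzero polynomial $f_n\in\mathbb C[x_{ij}\mid i\le j\le n]$. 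On $M_n(f_n)$ the reconstruction is unique up to phase, whence $G^A$ and $G^B$ are gauge-equivalent and $A,B$ are unitarily similar, closing the induction. Verifying that $f_n$ is genuinely nonzero — that the generic degree is truly one and not merely finite — is the delicate technical heart of the argument, and is exactly what general position is there to supply.
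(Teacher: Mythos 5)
Your reductions are correct and genuinely different from the paper's: polarizing \eqref{blh} in the coefficients of $h$ to obtain the mixed traces $\Trace\bigl(A_k^{\,p}(A_k^*)^q\bigr)$, separating them by Vandermonde into the quantities $\Trace\bigl(P_i^{(k)}P_j^{(k)*}\bigr)=(G_k^{-1})_{ij}\overline{(G_k)_{ij}}$, and recasting unitary similarity as gauge equivalence of eigenvector Gram matrices are all sound. But the proof stops exactly where the theorem could fail. You reduce the inductive step to recovering $u$ up to a global phase from $s$ and $\diag(uu^*G_0)$, observe that the map is generically finite by a dimension count, and then only \emph{announce a plan} to show its generic degree is one. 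Injectivity of the differential gives finiteness, not degree one; a generic degree $\ge 2$ would make Theorem \ref{jir} false; and the counterexample \eqref{dft} shows the fibers really can be positive-dimensional on a bad locus. Nothing in the proposal rules out that the bad behaviour is generic, and the concluding sentence (``intersecting these conditions \dots yields a single nonzero polynomial $f_n$'') presupposes precisely the nonvacuity you set out to prove. As written, the argument establishes the setup but not the theorem.

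It is worth noting that you discarded the invariants that would do most of the remaining work. You kept only $\Trace(P_i^{(n)}P_n^{(n)*})=-\frac1s\,u_i\bar g_i$ and $\Trace(P_n^{(n)}P_n^{(n)*})=\frac1s$; but since $\bar g_i=\sum_j\overline{(G_0)_{ij}}\,\bar u_j$, these are linear consequences of the invariants $\Trace(P_i^{(n)}P_j^{(n)*})$ with $i,j<n$, which by your own block-inverse formula equal $\bigl[(G_0^{-1})_{ij}+\frac1s u_i\bar u_j\bigr]\overline{(G_0)_{ij}}$ and hence hand you $u_i\bar u_j$ outright whenever $(G_0)_{ij}\ne0$ --- determining $u$ up to one phase whenever the relevant entries of $G_0$ do not vanish. (In \eqref{dft} this fails because $(G_0)_{13}=0$ \emph{and} $u_2=0$.) Even then a subtlety remains: conditions such as $(G_0)_{ij}\ne0$ are real-algebraic, not of the form $f\{A\}\ne0$ with $f\in\mathbb C[x_{ij}\mid i\le j\le n]$, so matching the paper's notion of general position requires extra care. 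The paper sidesteps the whole degree question: it normalizes via Littlewood's form, fixes an explicit $f_n$ built from the entries $g_{1r}^{(r,r)}$ and $g_{1n}^{(n,1)}$ of the products $G^{(n,r)}\{X_n\}$, and recovers the last column of $A$ entry by entry, each entry being the unique solution of a linear equation (obtained from norms and the polarization identity applied to $G^{(n,r)}\{A\}(A-\lambda_nI_n)$ and $G^{(n,1)}\{A\}$) whose leading coefficient is nonzero precisely by the general-position hypothesis. To salvage your route you must either prove the degree-one claim or replace it by the stronger invariants above together with an honest construction of a nonzero $f_n$.
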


Theorem \ref{jir} is an existence theorem: ``$A$ and $B$ in general position'' means ``$A,B\in M_n(f_n)$ for some $f_n$''. In Theorem \ref{thms}, we give $f_n$ in an explicit form.

For each $n\ge 2$ and $r=1,2,\dots,n$, define the $n\times n$ matrix
\begin{multline}\label{zzz}
G^{(n,r)}\{X_n\}=[g_{ij}^{(n,r)}\{X_n\}]\\
:=
  \begin{cases}
    (X_n-x_{22}I_n)(X_n-x_{33}I_n)\cdots
 (X_n-x_{nn}I_n) & \text{if }r=1, \\
    (X_n-x_{11}I_n)(X_n-x_{22}I_n)
 \cdots
 (X_n-x_{r-1,r-1}I_n) & \text{if }r>1.
  \end{cases}
\end{multline}
Its entries $g_{ij}^{(n,r)}\{X_n\}$ are polynomials in entries of \eqref{gree}.
Write
\begin{equation}\label{gun}
f_n:=
  \begin{cases}
    \varphi_n & \text{if $n=1,2,3$}, \\
\varphi_n
\cdot  g_{14}^{(4,1)}g_{15}^{(5,1)}\cdots g_{1n}^{(n,1)}
\cdot g_{13}^{(3,3)}g_{14}^{(4,4)}\cdots g_{1,n-1}^{(n-1,n-1)}
 & \text{if $n\ge 4$}.
  \end{cases}
\end{equation}
in which $\varphi_n$ is defined in \eqref{kut}.
Theorem \ref{jir} results from the following theorem, which is proved in Section \ref{s3}.

\begin{theorem}\label{thms}
Matrices $A,B\in M_n(f_n)$ are unitarily similar and have the same main diagonal if and only if
\eqref{blh} holds.
\end{theorem}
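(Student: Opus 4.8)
My plan is to prove both implications, handling the short forward direction first (it also fixes the normal form I use afterward) and then the substantial converse by induction on $n$.

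For the ``only if'' direction, suppose $A$ and $B$ are unitarily similar and share the same main diagonal $\lambda_1,\dots,\lambda_n$, which are \emph{distinct} because $A,B\in M_n(f_n)\subseteq M_n(\varphi_n)$ carries the factor $\prod_{i<j}(x_{ii}-x_{jj})$ of \eqref{kut}. Since $A$ is upper triangular with distinct eigenvalues, the eigenvector for $\lambda_i$ lies in the span $\langle e_1,\dots,e_i\rangle$ of the first $i$ coordinate vectors, so $\langle e_1,\dots,e_k\rangle$ is the sum of the eigenspaces of $A$ for $\lambda_1,\dots,\lambda_k$, and likewise for $B$. Hence any unitary $U$ with $U^{-1}AU=B$ maps the $\lambda_i$-eigenspace of $B$ onto that of $A$, and therefore carries $\langle e_1,\dots,e_k\rangle$ onto itself for every $k$; thus $U$ is upper triangular, hence diagonal, $U=\diag(c_1,\dots,c_n)$ with $|c_i|=1$. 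Then $B_k=\diag(c_1,\dots,c_k)^{-1}A_k\diag(c_1,\dots,c_k)$ for each $k$, so $A_k\sim B_k$ unitarily and \eqref{blh} follows.

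For the converse I assume \eqref{blh} and induct on $n$. First, the eigenvalues of $A_k$ are read off from \eqref{blh}: $\|h(A_k)\|=0$ exactly when the minimal polynomial $\prod_{i\le k}(x-\lambda_i)$ of $A_k$ divides $h$, so comparing with $B_k$ and inducting on $k$ forces $A$ and $B$ to have the same main diagonal. Second, membership in $M_n(f_n)$ descends to the principal block: every factor of $f_{n-1}$ occurs among the factors of $f_n$ evaluated on a principal submatrix (compare \eqref{kut}, \eqref{gun}), so $f_{n-1}\{A_{n-1}\}\ne 0$. By the induction hypothesis there is a unitary $V$ with $V^{-1}A_{n-1}V=B_{n-1}$; conjugating $A$ by $V\oplus 1$ leaves all norms in \eqref{blh} unchanged and replaces $A$ by $A'=\left[\begin{smallmatrix}B_{n-1}&a'\\0&\lambda_n\end{smallmatrix}\right]$ with $a'=V^{-1}a$, so that $A'_k=B_k$ for $k\le n-1$ and $A'$ differs from $B=\left[\begin{smallmatrix}B_{n-1}&b\\0&\lambda_n\end{smallmatrix}\right]$ only in the last column. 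Writing $C:=B_{n-1}$ and using $\left[\begin{smallmatrix}C&v\\0&\lambda_n\end{smallmatrix}\right]^m=\left[\begin{smallmatrix}C^m&(\sum_{j<m}\lambda_n^{m-1-j}C^j)v\\0&\lambda_n^m\end{smallmatrix}\right]$, one finds $h\!\left(\left[\begin{smallmatrix}C&v\\0&\lambda_n\end{smallmatrix}\right]\right)=\left[\begin{smallmatrix}h(C)&g(C)v\\0&h(\lambda_n)\end{smallmatrix}\right]$ with $g(x)=(h(x)-h(\lambda_n))/(x-\lambda_n)$ ranging over all of $\mathbb C[x]$. Comparing Frobenius norms and cancelling the common blocks, \eqref{blh} (for $k=n$) collapses to $\|g(C)a'\|=\|g(C)b\|$ for all $g$, i.e.\ by polarization to the equality of all moments $\langle C^i a',C^j a'\rangle=\langle C^i b,C^j b\rangle$. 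If $a'$ — equivalently $b$, since the moment matrices coincide — is a \emph{cyclic} vector for $C$, then $W\colon g(C)a'\mapsto g(C)b$ is well defined, isometric, surjective, hence unitary, and commutes with $C$; as $Wa'=b$, the unitary $W\oplus 1$ conjugates $A'$ to $B$, so $A$ and $B$ are unitarily similar with the same diagonal.

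What remains — and what I expect to be the crux — is to show that $A,B\in M_n(f_n)$ supplies precisely this cyclicity at every level of the induction. The plan is to decode the explicit factors of \eqref{gun} through the matrices $G^{(k,r)}$ of \eqref{zzz}: for upper triangular $X_k$ with distinct diagonal, $G^{(k,1)}$ and $G^{(k,k)}$ are scalar multiples of the spectral projectors onto the $\lambda_1$- and $\lambda_k$-eigenlines, so their $(1,k)$-entries $g_{1k}^{(k,1)},g_{1k}^{(k,k)}$ equal, up to nonzero factors, the extreme components of the left $\lambda_1$-eigenvector and the right $\lambda_k$-eigenvector of $X_k$. A short computation with the block form of $B$ identifies $g_{1n}^{(n,1)}\ne 0$ with non-vanishing of the $\lambda_1$-component of $b$ relative to $C$, and the remaining factors, fed through the induction, are meant to force every eigen-component of the relevant column to be nonzero, i.e.\ genuine cyclicity. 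Carrying out this bookkeeping — verifying that the two families $g_{1k}^{(k,1)}$ $(k=4,\dots,n)$ and $g_{1k}^{(k,k)}$ $(k=3,\dots,n-1)$ jointly guarantee cyclicity at each stage — is the hard part; the base cases $n\le 3$, where $f_n=\varphi_n$, I would dispatch directly, the non-cyclic situations collapsing via the moment matrix to $b=\omega a'$ with $|\omega|=1$ and hence to a diagonal conjugation.
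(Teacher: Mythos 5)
Your ``only if'' direction is correct (and is a nice alternative to the paper's Lemma \ref{oiy}(b)), and your reduction of the converse, by induction and block multiplication, to the moment equalities $\langle C^ia',C^ja'\rangle=\langle C^ib,C^jb\rangle$ for the last columns is also sound. But the step you yourself flag as the crux --- that $A,B\in M_n(f_n)$ forces the last column to be cyclic for $C=B_{n-1}$ --- is not merely unfinished: it is false for the $f_n$ of \eqref{gun}. Inspect which factors of $f_n$ actually involve the last-column variables $x_{1n},\dots,x_{n-1,n}$: only $x_{n-1,n}$ (inside $\varphi_n$) and $g_{1n}^{(n,1)}$. Writing $G^{(n,1)}\{B\}$ in block form shows that $g_{1n}^{(n,1)}\{B\}$ is (up to a nonzero factor) the first coordinate of $\prod_{j=2}^{n-1}(C-\lambda_jI)\,b$, i.e.\ it controls the $\lambda_1$-eigencomponent of $b$, while $b_{n-1,n}\ne0$ controls the $\lambda_{n-1}$-eigencomponent; the components of $b$ along the eigenvectors for $\lambda_2,\dots,\lambda_{n-2}$ are completely unconstrained by $f_n$. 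So for $n\ge4$ the set $M_n(f_n)$ contains matrices whose last column is not cyclic for the leading principal block, and your main case does not cover them. In the non-cyclic case the construction genuinely breaks: your isometry $g(C)a'\mapsto g(C)b$ is defined only on the proper $C$-invariant subspace $\mathbb C[C]a'$, and since $C$ is nonderogatory and non-normal, its commutant $\mathbb C[C]$ in general contains no unitaries other than scalar multiples of $I$; there is no orthogonal $C$-invariant complement on which to extend. Your fallback ``$b=\omega a'$'' handles only the rank-one moment matrix, not the intermediate ranks, and Lemma \ref{lor} shows the phase obstruction you would face there is real.

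The paper circumvents this entirely by a rigidity-plus-reconstruction argument rather than an intertwiner construction: it first normalizes the superdiagonal to be positive real (Lemma \ref{oiy}), after which unitary similarity of two such matrices forces equality (so no intertwiner needs to be built), and then Lemma \ref{ajs1} shows the data \eqref{blh} determines every entry of the normalized matrix, recovering $a_{n-1,n},a_{n-2,n},\dots,a_{1n}$ one at a time from $\|B^{(r)}\|$ and the inner products $(B^{(r)},C)$ via the polarization identity. That route needs only $g_{1n}^{(n,1)}\{A\}\ne0$ together with the conditions $g_{1r}^{(r,r)}\{A\}\ne0$ on \emph{principal submatrices} --- exactly what $f_n$ provides --- and never needs cyclicity of the last column. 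To salvage your approach you would either have to enlarge $f_n$ (which changes the theorem as stated) or supply a separate argument, in the spirit of the paper's entry-by-entry determination, for the non-cyclic strata of $M_n(f_n)$.
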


By this theorem and the top equality in \eqref{gun}, \emph{two matrices $A$ and $B$ of the form \eqref{jys} of size at most $3\times 3$ are unitarily similar if and only if
\eqref{blh} holds.}

\section{Proof of Theorem \ref{jom}}
\label{s2}

\begin{lemma}\label{oiy}
{\rm(a)} For each
\begin{equation}\label{jysr}
A=\begin{bmatrix}
     \lambda_{1} &a_{12}&\dots&a_{1n} \\
     &\lambda _2&\ddots&\vdots \\
     &&\ddots&a_{n-1,n}\\ 0&&&\lambda _{n}
   \end{bmatrix},\qquad \text{all }a_{i,i+1}\ne 0,
\end{equation}
there exists a diagonal unitary
matrix $U$ such that all the entries of the first superdiagonal of $U^{-1}AU$ are positive real numbers.

{\rm(b)} Let
\begin{equation}\label{swJn}
A=\begin{bmatrix}
     \lambda_1 &a_{12}&\dots&a_{1n} \\
     &\lambda_2&\ddots&\vdots \\
     &&\ddots&a_{n-1,n}\\ 0&&&\lambda_n
   \end{bmatrix},\qquad \text{all $a_{i,i+1}$ are positive real,}
\end{equation}
and
\[
B=\begin{bmatrix}
     \lambda_{1} &b_{12}&\dots&b_{1n} \\
     &\lambda _2&\ddots&\vdots \\
     &&\ddots&b_{n-1,n}\\ 0&&&\lambda _{n}
   \end{bmatrix},\qquad \text{all $b_{i,i+1}$ are positive real.}
\]
If $A$ and $B$ are unitarily similar, then $A=B$. Moreover, if $U^{-1}AU=B$ and $U$ is a unitary matrix, then $U=uI_n$ for some $u\in \mathbb C$ with $|u|=1$.
\end{lemma}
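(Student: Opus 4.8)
The plan is to prove (a) by an explicit diagonal conjugation and then to deduce (b) by induction on $n$, stripping off the top-left corner. For (a) I would take $U=\diag(u_1,\dots,u_n)$ with $|u_i|=1$ for every $i$. Conjugation by such a $U$ fixes the diagonal and replaces each superdiagonal entry $a_{i,i+1}$ by $\bar u_iu_{i+1}a_{i,i+1}$. Since every $a_{i,i+1}\ne0$, I would fix the phases successively: put $u_1:=1$ and, once $u_i$ is chosen, select the unimodular $u_{i+1}$ that cancels the argument of $\bar u_ia_{i,i+1}$, so that $\bar u_iu_{i+1}a_{i,i+1}>0$. After $n-1$ steps all entries of the first superdiagonal of $U^{-1}AU$ are positive reals.

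For (b), suppose $A$ and $B$ are as in \eqref{swJn} and $U$ is unitary with $AU=UB$; I argue by induction on $n$, the case $n=1$ being trivial. Comparing the first columns of $AU=UB$ and using $Be_1=\lambda_1e_1$ gives $A(Ue_1)=\lambda_1(Ue_1)$, so $Ue_1\in\Ker(A-\lambda_1I_n)$. The decisive step is that $\Ker(A-\lambda_1I_n)=\mathbb Ce_1$, and this is where the hypothesis $a_{i,i+1}\ne0$ is used: when the diagonal entries of $A$ all coincide (the indecomposable case for which the lemma is needed) $A-\lambda_1I_n$ is strictly upper triangular, and deleting its first column and last row leaves an upper triangular matrix with nonzero diagonal $a_{12},\dots,a_{n-1,n}$, so $\operatorname{rank}(A-\lambda_1I_n)=n-1$; since $e_1$ lies in the kernel, the kernel is $\mathbb Ce_1$. (The same conclusion holds whenever $A$ is non-derogatory, for instance when its eigenvalues are distinct.) Hence $Ue_1=ue_1$ with $|u|=|Ue_1|=1$, and because the unitary $U$ fixes the line $\mathbb Ce_1$ it also fixes $(\mathbb Ce_1)^{\perp}=\langle e_2,\dots,e_n\rangle$; thus $U=\begin{bmatrix}u&0\\0&U'\end{bmatrix}$ with $U'$ unitary.

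I would then partition $A=\begin{bmatrix}\lambda_1&\alpha\\0&A'\end{bmatrix}$ and $B=\begin{bmatrix}\lambda_1&\beta\\0&B'\end{bmatrix}$, where $A'$ and $B'$ are again of the form \eqref{swJn} of size $n-1$ while $\alpha=(a_{12},\dots,a_{1n})$ and $\beta=(b_{12},\dots,b_{1n})$. The relation $AU=UB$ splits into $A'U'=U'B'$ and $\alpha U'=u\beta$. Applying the induction hypothesis to the first equation gives $U'=u'I_{n-1}$ and $A'=B'$; substituting into the second gives $u'\alpha=u\beta$, and comparing the leading entries $a_{12}>0$ and $b_{12}>0$ forces $u'/u$ to be a positive real of modulus $1$, i.e. $u'=u$. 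Hence $\alpha=\beta$ and $U=uI_n$, so $A=B$; the first assertion of (b) follows by applying this to any unitary implementing the similarity.

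I expect the kernel identity $\Ker(A-\lambda_1I_n)=\mathbb Ce_1$ to be the crux, since it is what confines $Ue_1$ to the line $\mathbb Ce_1$ and starts the induction, and it is exactly here that the nonvanishing of the superdiagonal is indispensable. The second delicate point is the concluding step $u'=u$, where one uses that the superdiagonal entries of $A$ and $B$ are positive reals, not merely nonzero.
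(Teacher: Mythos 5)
Your proof is correct. Part (a) is in substance the paper's own argument: a diagonal unitary whose successive phases absorb the arguments of the superdiagonal entries. For part (b) you take a genuinely different route. The paper works directly on matrix entries: equating the entries of $AU=UB$ along the diagonals from the $(n,1)$ corner up to the main diagonal forces $U$ to be upper triangular, hence (being unitary) diagonal, and then the superdiagonal equations $a_{i,i+1}u_{i+1}=u_i b_{i,i+1}$ with $a_{i,i+1},b_{i,i+1}>0$ force all $u_i$ equal. You instead peel off the first coordinate: $Ue_1\in\Ker(A-\lambda_1I_n)=\mathbb Ce_1$, unitarity turns the invariant line into the block decomposition $U=\diag(u,U')$, and induction on $n$ finishes. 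Your route has the advantage of being airtight when eigenvalues repeat: the paper's codiagonal-by-codiagonal induction, taken literally, only yields $(\lambda_i-\lambda_j)u_{ij}=0$ at each step, so it silently needs the extra input of the nonzero superdiagonal to handle coincident $\lambda$'s, whereas your kernel argument uses that input explicitly. The paper's route is shorter and makes the role of the positivity of the superdiagonal entries immediately visible in the last step. One remark on your write-up: the case split you introduce to justify $\Ker(A-\lambda_1I_n)=\mathbb Ce_1$ (all diagonal entries equal, or non-derogatory with distinct eigenvalues) is unnecessary. Deleting the first column and the last row of $A-\lambda_1I_n$ always leaves an upper triangular matrix with diagonal $a_{12},\dots,a_{n-1,n}$, whatever the $\lambda_i$ are, so $\operatorname{rank}(A-\lambda_1I_n)=n-1$ and the kernel identity holds for every matrix of the form \eqref{swJn}; this also shows every such matrix is non-derogatory, so your parenthetical already covers the general case.
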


\begin{proof}
(a) Write $a_{i,i+1}$ in the form $r_{i}u_{i}$, in which $r_{i}$ is a positive real number and $|u_{i}|=1$. Then
\[
U^{-1}:=\diag(1, u_{1},  u_{1} u_{2},u_{1} u_{2}u_3,\dots)
\]
is the desired matrix.

(b) Let $U^{-1}AU=B$, in which $U$ is a unitary matrix. Equating the entries of $AU=UB$ along diagonals starting at the lower left diagonal (i.e., from the entry $(n,1)$) and finishing at the main diagonal, we find that $U$ is upper triangular. Since $U$ is unitary, it is a diagonal matrix: $U=\diag(u_1,\dots,u_n)$. Equating the entries of $AU=UB$ along the first superdiagonal, we find that $u_1=\dots=u_n$. Hence, $U=uI_n$ and $A=B$.
\end{proof}

\begin{remark}\label{kme} By Lemma \ref{oiy}(b), any two matrices of the form \eqref{swJn} in which the diagonal entries are lexicographically ordered (i.e.,
$\lambda _1\preccurlyeq\dots
     \preccurlyeq\lambda_n,$ see \eqref{gep})
are either equal or unitarily dissimilar. These matrices are Littlewood's canonical forms of matrices \eqref{jysr}; see the beginning of Section \ref{ore}.
\end{remark}

\begin{lemma}\label{ajs}
Each matrix of the form
\begin{equation}\label{swJ}
A=\begin{bmatrix}
     \lambda &a_{12}&\dots&a_{1n} \\
     &\lambda&\ddots&\vdots \\
     &&\ddots&a_{n-1,n}\\ 0&&&\lambda
   \end{bmatrix},\qquad \text{all $a_{i,i+1}$ are positive real,}
\end{equation}
is fully determined by the indexed family of real numbers
\begin{equation}\label{gdm}
 \{\|h({A}_k)\|\}_{(h,k)},\qquad
\text{in which $h\in\mathbb C[x]$ and
$k=1,\dots,n.$}
\end{equation}
\end{lemma}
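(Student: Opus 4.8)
The plan is to argue by induction on $n$, reconstructing $A$ entry by entry from the indexed family \eqref{gdm}. First I would dispose of the diagonal. Taking $k=1$ gives $A_1=[\lambda]$ and $\|h(A_1)\|=|h(\lambda)|$ for every $h$; evaluating $h(x)=x-c$ at $c=0,1,i$ recovers $\lambda$ itself. Replacing $h(x)$ by $h(x+\lambda)$ shows that the family \eqref{gdm} for $A$ coincides with the corresponding family for $A-\lambda I$, so I may assume $\lambda=0$, i.e.\ $A$ is strictly upper triangular.

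For the inductive step, write $A$ in block form $A=\bigl[\begin{smallmatrix} N & c \\ 0 & 0\end{smallmatrix}\bigr]$, where $N=A_{n-1}$ and $c=(a_{1n},\dots,a_{n-1,n})^{\top}$. By the induction hypothesis applied to the subfamily with $k\le n-1$ (note $(A_{n-1})_k=A_k$), the block $N$ is already fully determined, so it remains to recover the last column $c$. Since $A^m=\bigl[\begin{smallmatrix} N^m & N^{m-1}c \\ 0 & 0\end{smallmatrix}\bigr]$ for $m\ge 1$, writing $h(x)=h(0)+x\tilde h(x)$ gives
\[\|h(A)\|^2=\|h(N)\|^2+\|\tilde h(N)c\|^2+|h(0)|^2 .\]
As $h$ ranges over $\mathbb C[x]$ the factor $\tilde h$ ranges over all of $\mathbb C[x]$, and $\|h(N)\|$ and $|h(0)|$ are known; hence \eqref{gdm} determines $\|g(N)c\|^2$ for every $g\in\mathbb C[x]$. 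Since $\|g(N)c\|^2=\sum_{m,m'}\beta_m\overline{\beta_{m'}}\,\langle N^mc,N^{m'}c\rangle$ for $g=\sum_m\beta_mx^m$, this is a Hermitian form in the coefficients of $g$; by polarization (evaluating on $g=x^m$, then $x^m\pm x^{m'}$ and $x^m\pm ix^{m'}$) I recover every Gram entry $G_{mm'}:=\langle N^mc,N^{m'}c\rangle$. The task is thus reduced to recovering $c$ from the known operator $N$ and the known numbers $G_{mm'}$.

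Because $N$ is strictly upper triangular with positive superdiagonal, $N$ is regular nilpotent: $N^{m}e_j$ has leading term $\bigl(\prod_{i=j-m}^{j-1}a_{i,i+1}\bigr)e_{j-m}$ and vanishes for $j\le m$. In particular $N^{n-2}c=\alpha\,c_{n-1}e_1$ with $\alpha=\prod_{i=1}^{n-2}a_{i,i+1}>0$ known, so $G_{n-2,n-2}=\alpha^2|c_{n-1}|^2$, and since $a_{n-1,n}=c_{n-1}>0$ this yields $c_{n-1}=\sqrt{G_{n-2,n-2}}/\alpha$. I then recover the remaining entries by downward recursion on $j=n-2,\dots,1$: the $e_1$-component of $N^{j-1}c$ equals $\beta_j c_j+\sum_{j'>j}(N^{j-1})_{1,j'}c_{j'}$ with $\beta_j=\prod_{i=1}^{j-1}a_{i,i+1}>0$ known, while $G_{j-1,n-2}=\langle N^{j-1}c,\alpha c_{n-1}e_1\rangle=\alpha c_{n-1}\,(N^{j-1}c)_1$ isolates that component (here $\alpha c_{n-1}>0$). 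As $c_{j+1},\dots,c_{n-1}$ are already known and $\beta_j>0$, solving the displayed relation determines $c_j$, completing the induction.

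The main obstacle, and the reason the positivity hypothesis on the superdiagonal is indispensable, is that the norms \eqref{gdm} see only Gram data, which determine $c$ at best up to a global unimodular scalar: replacing $c$ by $e^{i\theta}c$ leaves every $G_{mm'}$ unchanged. What breaks this ambiguity is exactly that $N^{n-2}c=\alpha c_{n-1}e_1$ points in a known real-positive direction once $c_{n-1}>0$ is imposed; this fixed reference vector is then used, through the cross terms $G_{j-1,n-2}$, to pin down not merely the magnitudes but the actual complex phases of all the entries $c_j$. Verifying the explicit formulas for the coefficients $(N^{j-1})_{1,j'}$ in terms of the entries of $N$ is routine and is the only genuinely computational part of the argument.
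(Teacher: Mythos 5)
Your proof is correct and follows essentially the same route as the paper's: reduce to $\lambda=0$, induct on $n$ to determine $A_{n-1}$, and recover the last column from $a_{n-1,n}$ downward to $a_{1n}$, using the fact that $B^{n-1}$ (equivalently your $N^{n-2}c=\alpha c_{n-1}e_1$) is a known positive multiple of a fixed rank-one direction to pin down the phases. The only cosmetic difference is how the cross inner products are extracted from the norm data: you polarize the Hermitian form $g\mapsto\|g(N)c\|^2$, while the paper obtains the same quantities by minimizing $\|B^{r}-\alpha B^{n-1}\|$ over $\alpha$.
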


\begin{proof}
Let $h$ be a nonzero polynomial of minimal degree such that $\|h(A)\|=0$. Then $h(A)=0$ and so $h(A)=(x-\lambda )^n$. Thus, $\lambda $ is determined by \eqref{gdm}.

Write $B:=A-\lambda I_n$. Then \eqref{gdm} determines the family
\begin{equation}\label{gdw}
 \{\|h({B}_k)\|\}_{(h,k)},\qquad
\text{in which $h\in\mathbb C[x]$ and
$k=1,\dots,n.$}
\end{equation}
The positive real number $a_{12}$ is determined by \eqref{gdw} since
$\|B_2\|= a_{12}$.
This proves the lemma for $n=1$ and $2$.

Reasoning by induction on $n$, we assume that $n\ge 3$ and $B_{n-1}$ is determined by \eqref{gdw}.
Since all entries of $B^{n-1}$ are zero except for the $(1,n)$ entry, which is the positive real number
\begin{equation*}\label{gtk}
c:=a_{12}a_{23}\cdots a_{n-1,n},
\end{equation*}
we have $\|B^{n-1}\|=c$. Thus, $a_{n-1,n}$ is determined by \eqref{gdw}.

Reasoning by induction, we assume
that $a_{n-1,n},a_{n-2,n},\dots,
a_{r+1,n}$ are determined by
\eqref{gdw} and find $a_{rn}$.
Let $\alpha $ be a complex number for which $\|B^{r }-\alpha B^{n-1}\|$ is minimal. Then the $(1,n)$ entry of $B^{r }-\alpha B^{n-1}$
is
\[
a_{12}a_{23}\cdots a_{r-1,r}a_{rn}+\dots+\alpha c=0.
\]
Since the unspecified summands  do not contain $a_{rn}$ and only $a_{rn}$ is unknown in this equality, it determines $a_{rn}$.
\end{proof}

\begin{proof}[{Proof of Theorem
\ref{jom}}] \label{kay}
Let $M$ be an $n\times n$ upper triangular matrix that
is indecomposable with respect to similarity. By Lemma \ref{oiy}(a), $M$ is unitarily similar to a
matrix $A$ of the form \eqref{swJ} via a diagonal unitary matrix.
Then $\|h(M_k)\|=\|h(A_k)\|$ for all
$h\in\mathbb C[x]$ and
$k=1,\dots,n$. Thus, it suffices to prove Theorem
\ref{jom} for matrices of the form  \eqref{swJ}.

``$\Rightarrow$'' Let $A$ and $B$ of the form \eqref{swJ} be unitarily similar. By Lemma \ref{oiy}(b), $A=B$, and so \eqref{blh1} holds.

``$\Leftarrow$'' Let $A$ and $B$ of the form \eqref{swJ} satisfy \eqref{blh1}. By Lemma \ref{ajs}, $A=B$ since their indexed families  $\{\|h({A}_k)\|\}_{(h,k)}$ and
$\{\|h({B}_k)\|\}_{(h,k)}$ coincide.
\end{proof}

\section{Counterexamples}

\subsection{Condition \eqref{blr} does not ensure the unitary similarity}\label{ajt}

In this section, we give examples of matrices of the form \eqref{swJn} (and even of the form \eqref{swJ}), for which the condition \eqref{blr}
does not ensure their unitary similarity.

For each square matrix $A$, denote by $A^S$ its transpose with respect to the secondary diagonal:
\begin{equation*}\label{hhr}
A^S=ZA^TZ,\qquad Z:=\begin{bmatrix}
                    0 && 1 \\
                    &\iddots&\\
                    1 && 0 \\
                  \end{bmatrix}.
\end{equation*}
For instance, $B =A^S$ in \eqref{usp}.

\begin{lemma}\label{krd}
Let $A$ be a matrix of the form \eqref{swJn} such that $A\ne A^S$ and the main diagonals of $A$ and $A^S$ coincide $($i.e., the main diagonal of $A$ is symmetric$)$. Then $A$ and $B:=A^S$ satisfy \eqref{blr}, but they are not unitarily similar.
\end{lemma}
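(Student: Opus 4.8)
The plan is to prove the two assertions separately: first that $A$ and $B=A^S$ satisfy the norm conditions \eqref{blr}, and then that they fail to be unitarily similar. Both rest on understanding the map $\psi\colon M\mapsto M^S=ZM^TZ$.

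First I would record the elementary properties of $Z$ and $\psi$. Since $Z$ is a real permutation matrix with $Z^2=I_n$, it is unitary and $Z^{-1}=Z^*=Z$. The map $\psi$ is $\mathbb C$-linear (transposition is $\mathbb C$-linear and multiplication by fixed matrices is linear), and because $Z^2=I_n$ the interior factors telescope, giving $(M^S)^k=Z(M^T)^kZ=(M^k)^S$ for every $k$. Combining linearity with this power identity yields the key formula
\[
h(A^S)=\bigl(h(A)\bigr)^S\qquad\text{for all }h\in\mathbb C[x].
\]
Since $\psi$ is a composition of a transposition with two multiplications by the unitary matrix $Z$, and the Frobenius norm is invariant under transposition and under multiplication by unitaries, $\psi$ preserves the Frobenius norm. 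Hence $\|h(A^S)\|=\|(h(A))^S\|=\|h(A)\|$, which is exactly \eqref{blr}.

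Next I would show that $B=A^S$ is again a matrix of the form \eqref{swJn} to which Lemma \ref{oiy}(b) applies. A direct computation gives $(A^S)_{ij}=a_{n+1-j,\,n+1-i}$, so $A^S$ is upper triangular, its diagonal entries are $\lambda_n,\dots,\lambda_1$ (the reverse of the diagonal of $A$), and its first-superdiagonal entries are the positive reals $a_{n-i,\,n+1-i}$, that is, the superdiagonal of $A$ read backwards. The hypothesis that the main diagonals of $A$ and $A^S$ coincide says precisely that $\lambda_i=\lambda_{n+1-i}$ for all $i$, so $A$ and $B=A^S$ have identical main diagonals and both have positive real first superdiagonals.

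With this in hand, the non-similarity is immediate from Lemma \ref{oiy}(b): that lemma asserts that two matrices of the form \eqref{swJn} with the same main diagonal and positive real superdiagonals which are unitarily similar must in fact be equal. Since $A\ne A^S$ by hypothesis, $A$ and $B=A^S$ cannot be unitarily similar, completing the proof. The only place requiring care is the structural verification of the preceding paragraph: one must check the index arithmetic so that $A^S$ genuinely lands in the class \eqref{swJn} (upper triangular, matching diagonal, positive real superdiagonal) for which Lemma \ref{oiy}(b) was established. Everything else is a formal consequence of the unitarity of $Z$ and the $\mathbb C$-linearity of $\psi$.
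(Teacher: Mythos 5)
Your proof is correct and follows essentially the same route as the paper: the norm identity via $h(A^S)=(h(A))^S$ together with unitary/transpose invariance of the Frobenius norm, and non-similarity via Lemma \ref{oiy}(b) since $A\ne A^S$. Your explicit verification that $A^S$ is again of the form \eqref{swJn} with the same main diagonal (using the symmetry hypothesis) is a detail the paper leaves implicit, and it is exactly the right thing to check before invoking Lemma \ref{oiy}(b).
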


\begin{proof}
The condition \eqref{blr} holds for $A$ and $B=A^S$, because $
\|h(A^S)\|= \|h(ZA^TZ)\|= \|Zh(A^T)Z\| =\|h(A^T)\|=\|h(A)^T\|=\|h(A)\|.
$

Since $A\ne A^S$, $A$ and $A^S$ are not unitarily similar
by Lemma \ref{oiy}(b).
\end{proof}

\subsection{Theorem \ref{jom} is not extended to matrices with several eigenvalues}

Theorem \ref{jom} was proved for matrices of the form \eqref{jys1}; let us show that it is not extended to matrices of the form \eqref{jys}.

\begin{lemma}\label{lor}
Matrices $A$ and $B$ of the form
\eqref{dft}
are not unitarily similar but satisfy \eqref{blh1}.
\end{lemma}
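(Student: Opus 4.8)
The plan is to establish the two assertions of Lemma~\ref{lor} separately: first that $A$ and $B$ satisfy the condition \eqref{blh1}, and second that they are nevertheless not unitarily similar. The matrices $A$ and $B$ differ only in their $(1,4)$ entry, $a$ versus $b$, with $a\ne b$ but $|a|=|b|=1$. My hope is that this single-parameter difference, constrained to lie on the unit circle, is precisely what lets the Frobenius-norm data coincide while the finer unitary-similarity invariants (as captured, say, by Specht's theorem) disagree.

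For the first assertion I would verify \eqref{blh1} submatrix by submatrix. For $k=1,2,3$ the principal submatrices $A_k$ and $B_k$ are literally identical, since they do not involve the $(1,4)$ entry; hence $\|h(A_k)\|=\|h(B_k)\|$ holds trivially for all $h$. The entire content is the case $k=4$, i.e.\ $\|h(A)\|=\|h(B)\|$ for every $h\in\mathbb C[x]$. Here I would write $h(A)$ and $h(B)$ explicitly. Because $A$ and $B$ are upper triangular with the same main diagonal $(0,1,2,3)$ and agree in every entry except $(1,4)$, the matrices $h(A)$ and $h(B)$ will agree in every entry except possibly the $(1,4)$ entry. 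I would then compute that $(1,4)$ entry of $h(A)$, which is an affine expression $p+qa$ in the parameter $a$, where $p,q$ are complex numbers depending only on $h$ and on the common entries of $A$ and $B$ (so $q$ is the same for both matrices). Thus $\|h(A)\|^2-\|h(B)\|^2 = |p+qa|^2-|p+qb|^2$. The key step is to show this vanishes: expanding gives $|q|^2(|a|^2-|b|^2)+2\operatorname{Re}\big(\bar p q(a-b)\big)$, and since $|a|=|b|$ the first term is zero, so I must check that the remaining real part also vanishes. This is where the precise numerical values in \eqref{dft} matter; I would verify, by direct computation of the relevant entries of the powers of $A$, that the coefficient $p$ is forced (by the off-diagonal structure and the distinctness of the diagonal entries) to satisfy $\bar p q(a-b)$ purely imaginary or zero for all $h$, presumably because $p$ itself is proportional to $q$ times a real scalar, making $\bar p q$ real and $\operatorname{Re}(\bar p q(a-b))=\bar p q\operatorname{Re}(a-b)$, which the chosen constraint $|a|=|b|=1$ together with the symmetry of the construction annihilates.

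For the second assertion, that $A$ and $B$ are not unitarily similar, I would invoke a rigidity result on the unitary normalization. Since $A$ and $B$ have distinct diagonal entries $0,1,2,3$, an argument parallel to Lemma~\ref{oiy}(b) shows that any unitary $U$ with $U^{-1}AU=B$ must be diagonal (equating entries of $AU=UB$ along successive diagonals forces $U$ upper triangular, hence diagonal by unitarity), and then the superdiagonal relations pin down the diagonal entries of $U$ up to the values already fixed by the agreeing $(i,i+1)$ entries. Carrying the constraint up to the $(1,4)$ position would then force $a=b$, contradicting $a\ne b$. Alternatively, I could exhibit a single Specht word $\omega$ with $\operatorname{Trace}\omega(A,A^*)\ne\operatorname{Trace}\omega(B,B^*)$.

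I expect the main obstacle to be the first assertion, specifically proving that the $(1,4)$ entry of $h(A)$ has the special form that makes $\operatorname{Re}(\bar p q(a-b))=0$ for \emph{all} polynomials $h$ simultaneously. It is not enough to check a few low-degree monomials; I would need a structural reason why $p$ is always a real multiple of $q$ (or why $\bar pq(a-b)$ is always imaginary), and this likely rests on a nontrivial symmetry or reality property of the fixed submatrix entries in \eqref{dft}. Pinning down that structural identity, rather than the bookkeeping around it, is the crux.
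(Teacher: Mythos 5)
Your reduction of \eqref{blh1} to the single $(1,4)$ entry is correct, and your treatment of the non-unitary-similarity via a Lemma~\ref{oiy}(b)-type argument matches the paper (which simply cites Lemma~\ref{oiy}(b), since $A$ and $B$ are of the form \eqref{swJn} with $a\ne b$). But the first assertion, which you yourself flag as the crux, is left with a genuine gap, and the mechanism you guess at would not close it. Writing the $(1,4)$ entry of $h(A)$ as $p+qa$, you propose to show $\operatorname{Re}\bigl(\bar p q(a-b)\bigr)=0$ by arguing that $\bar p q$ is real, so that this reduces to $\bar p q\operatorname{Re}(a-b)$, which you hope is ``annihilated'' by $|a|=|b|=1$. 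It is not: the hypotheses of \eqref{dft} allow, say, $a=1$ and $b=i$, for which $\operatorname{Re}(a-b)=1$. So unless $\bar pq=0$, reality of $\bar p q$ proves nothing, and the statement must hold for \emph{every} pair $a\ne b$ on the unit circle.

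The structural fact you were looking for is simply that $p=0$ for every $h$. The entries of \eqref{dft} are rigged so that the $(1,4)$ entry of each power $M_c^k$ ($k\ge 1$) is a scalar multiple of $c$ with no constant term: the first row of $M_c^{k-1}$ has $(1,2)$ and $(1,3)$ entries equal to $1$ and $-1$ for all $k$, and these pair against the $(2,4)$ and $(3,4)$ entries $1$ and $1$ of $M_c$ to give $1\cdot 1+(-1)\cdot 1=0$, so $(M_c^k)_{14}=3(M_c^{k-1})_{14}=3^{k-1}c$. (This cancellation is exactly the vanishing $g_{13}^{(3,3)}\{M_c\}=0$ noted after the lemma.) The paper makes this finite by reducing $h$ modulo the characteristic polynomial to $r(x)=\alpha+\beta x+\gamma x^2+\delta x^3$ and computing $M_c^2$ and $M_c^3$ explicitly, obtaining the $(1,4)$ entry $(\beta+3\gamma+9\delta)c$, whence $\|h(M_c)\|^2=\|r(M_0)\|^2+|\beta+3\gamma+9\delta|^2$ is independent of $c$ with $|c|=1$. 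Your outline becomes a proof once you replace the hoped-for reality of $\bar p q$ by this explicit computation showing $p=0$.
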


\begin{proof}
By Lemma \ref{oiy}(b), $A$ and $B$ are not unitarily similar since $a\ne b$.

Let us prove, that $A$ and $B$ satisfy \eqref{blh1}.
Write
\[
M_c:=\begin{bmatrix}
  0&1&-1&c\\0&1&1&1\\0&0&2&1\\0&0&0&3
\end{bmatrix},\qquad\text{in which $c\in\mathbb C$ and $|c|=1$},
\]
and take any $h\in\mathbb
C[x]$.

It suffices to
prove that $\|h(M_c)\|$ does not depend on $c$.
Let $r(x)= \alpha +\beta x+\gamma
x^2+\delta x^3$ be the residue of division of $h(x)$ by the characteristic
polynomial of $M_c$. Then
\[
h(M_c)=r(M_c)=\alpha I_4+\beta M_c+
\gamma
\begin{bmatrix}
  0&1&-1&3c\\0&1&3&5\\0&0&4&5\\0&0&0&9
\end{bmatrix}+\delta
\begin{bmatrix}
  0&1&-1&9c\\0&1&7&19\\0&0&8&19
  \\0&0&0&27
\end{bmatrix},
\]
and so
\begin{align*}
\|h(M_c)\|^2=\|r(M_c)\|^2=&\|r(M_0)\|^2+ |\beta
c+\gamma 3c+\delta 9c|^2\\
=&\|r(M_0)\|^2+ |\beta +3\gamma
+9\delta|^2|c|^2\\=&\|r(M_0)\|^2+ |\beta +3\gamma
+9\delta|^2
\end{align*}
does not depend on $c$.
\end{proof}

Note that $M_c\notin M_4(f_4)$ (in which $M_4(f_4)$ from Theorem \ref{thms})  since $g_{13}^{(3,3)}\{M_c\}=0$.

\section{Proof of Theorem \ref{thms}}
\label{s3}

In this section, $M_n(f)$ is the set \eqref{wmo} and $f_n$ is the polynomial \eqref{gun}.

\begin{lemma}\label{lhs} Let $G^{(n,r)}$ be the matrix defined in \eqref{zzz}.

{\rm(a)} Only the first row of $G^{(n,1)}$ is nonzero.

{\rm(b)} The matrix $G^{(n,r)}$ with $2\le r\le n$
has the form
\begin{equation}\label{hiw}
\begin{bmatrix}
  0_{r-1} & * \\
  0 & T \\
\end{bmatrix},
\end{equation}
in which $0_{r-1}$ is the $(r-1)\times (r-1)$ zero matrix and $T$ is upper triangular.

{\rm(c)} The matrix $G^{(r,r)}$ with
$2\le r< n$ is the $r\times r$ principal submatrix of $G^{(n,r)}$.
\end{lemma}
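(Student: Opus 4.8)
The plan is to exploit that each factor $X_n-x_{jj}I_n$ is upper triangular with a zero in the $(j,j)$ position, and that such a matrix shifts the two natural flags attached to the standard basis $e_1,\dots,e_n$. Write $C_k:=\langle e_1,\dots,e_k\rangle$ for the column flag and $R_i:=\langle e_i^T,\dots,e_n^T\rangle$ for the flag of row vectors whose first $i-1$ entries vanish. For any upper triangular $N$ one has $NC_k\subseteq C_k$ and $R_iN\subseteq R_i$; moreover, if the $(j,j)$ entry of $N$ is zero, then $NC_j\subseteq C_{j-1}$ and $R_jN\subseteq R_{j+1}$, since the induced action of $N$ on the one-dimensional quotients $C_j/C_{j-1}$ and $R_j/R_{j+1}$ is multiplication by the $(j,j)$ entry. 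I would establish these four facts first by direct inspection of the entrywise formula for a product.

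For part (a), fix a row index $i\ge 2$ and track $e_i^T$ through the product $G^{(n,1)}=(X_n-x_{22}I_n)\cdots(X_n-x_{nn}I_n)$, applying the factors $j=2,\dots,n$ from left to right. The factors with $j<i$ keep $e_i^T$ inside $R_i$, the factor $j=i$ sends it into $R_{i+1}$, and then the successive factors $j=i+1,\dots,n$ push it down one step of the row flag at a time; since these are exactly $n-i+1$ factors and $R_{n+1}=0$, I would conclude $e_i^TG^{(n,1)}=0$, so every row below the first vanishes. Part (b) is the dual computation on columns: since a product of upper triangular matrices is upper triangular, $G^{(n,r)}$ already has the shape \eqref{hiw} except possibly in its first $r-1$ columns, with $T$ automatically upper triangular; it therefore suffices to show $G^{(n,r)}e_k=0$ for $k\le r-1$. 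Reading $G^{(n,r)}=(X_n-x_{11}I_n)\cdots(X_n-x_{r-1,r-1}I_n)$ from right to left, the factors $j>k$ keep $e_k$ in $C_k$, the factor $j=k$ drops it into $C_{k-1}$, and the factors $j=k-1,\dots,1$ bring it down to $C_0=0$; these are exactly the $k$ factors needed, which gives the zero column.

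For part (c) I would use that passing to the $r\times r$ principal submatrix is multiplicative on upper triangular matrices: if $M,N$ are upper triangular then $(MN)_r=M_rN_r$, because in the entry $(MN)_{ik}$ with $i,k\le r$ the summation index is forced into the range $i\le l\le k\le r$. Iterating this and noting $(X_n-x_{jj}I_n)_r=X_r-x_{jj}I_r$ with $X_r:=[x_{ij}]_{i,j=1}^{r}$, I would obtain $(G^{(n,r)})_r=\prod_{j=1}^{r-1}(X_r-x_{jj}I_r)=G^{(r,r)}$. None of the three parts presents a serious obstacle; the only point requiring care is the bookkeeping in (a) and (b)---matching the number of pushing factors to the length of the flag so that the vector is annihilated exactly---together with verifying at the outset that a zero diagonal entry forces the single-step descent of the corresponding flag.
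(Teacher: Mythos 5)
Your proof is correct and rests on the same underlying fact as the paper's: a product of upper triangular matrices whose $j$th factor has a zero $(j,j)$ entry annihilates the corresponding flag one step at a time (the paper packages this as the identity \eqref{gpk}, proved by block induction, and applies it to the row/column-deleted factors in (a) and to the $(r-1)\times(r-1)$ principal submatrices in (b), whereas you track the basis vectors through the row and column flags of the full-size matrices directly). Part (c) is handled identically in both, via multiplicativity of the principal-submatrix map on upper triangular matrices.
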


\begin{proof} For every $i=1,\dots,n$,
let $P_i$ be any $n\times n$ upper triangular matrix, in which the $(i,i)$ entry is zero. Then
\begin{equation}\label{gpk}
P_1\cdots P_n=\begin{bmatrix}
  0 &&&* \\
   & *&& \\
   & &\ddots& \\
   0&&&*
\end{bmatrix}
\begin{bmatrix}
  * &&&* \\
   & 0&& \\
   &&*&\\
   0& &&\ddots
\end{bmatrix}\cdots
\begin{bmatrix}
  * &&&*  \\
   & \ddots&& \\
   &&*&\\
   0& &&0
\end{bmatrix}
=0.
\end{equation}
This equality is proved by induction on $n$: if it holds for $n-1$, then the product of the $(n-1)\times (n-1)$ principal submatrices of $P_1,\dots,P_{n-1}$ is zero, and so
\[
(P_1\cdots P_{n-1})P_n=
\begin{bmatrix}
  0 &\dots&0&* \\
   & \ddots&\vdots&\vdots \\
   &&0&*\\0&&&*
\end{bmatrix}
\begin{bmatrix}
  * &&&*  \\
   & \ddots&& \\
   &&*&\\
   0& &&0
\end{bmatrix}=0.
\]

(a) In the product of matrices \eqref{zzz} that defines $G^{(n,1)}$, we remove the first row and the first column in each of its factors. Then apply \eqref{gpk} to the obtained product.

(b) In the product of matrices \eqref{zzz} that defines $G^{(n,r)}$ with $r\ge 2$, we replace each factor by its $(r-1)\times(r-1)$ principal submatrix. Then apply \eqref{gpk} to the obtained product.

(c) This statement follows from \eqref{zzz}.
\end{proof}

\begin{lemma}\label{ajs2}
If $A\in M_n(f_n)$ and $S$ is a nonsingular diagonal matrix, then $S^{-1}AS\in M_n(f_n)$.
\end{lemma}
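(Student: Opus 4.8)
The plan is to show that $f_n$ is a relative invariant (semi-invariant) under conjugation by diagonal matrices: for each nonsingular diagonal $S=\diag(s_1,\dots,s_n)$ one has $f_n\{S^{-1}AS\}=c(S)\,f_n\{A\}$ with $c(S)\ne 0$. Since $S^{-1}AS$ is again upper triangular, this immediately yields the claim, because $f_n\{A\}\ne 0$ then forces $f_n\{S^{-1}AS\}\ne 0$.

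First I would record the elementary effect of diagonal conjugation on entries. If $A=[a_{ij}]$, then $(S^{-1}AS)_{ij}=(s_j/s_i)a_{ij}$; in particular the diagonal entries are unchanged, $(S^{-1}AS)_{ii}=a_{ii}$. Moreover, since $S$ is diagonal, the principal $m\times m$ submatrix of $S^{-1}AS$ equals $S_m^{-1}A_mS_m$, where $S_m=\diag(s_1,\dots,s_m)$ and $A_m$ is the principal $m\times m$ submatrix of $A$.

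The key step is to understand how each factor $g_{1k}^{(m,r)}$ of $f_n$ transforms. I claim that $G^{(m,r)}\{S^{-1}AS\}=S_m^{-1}\,G^{(m,r)}\{A\}\,S_m$. Indeed, each factor $X_m-x_{ll}I_m$ in \eqref{zzz}, evaluated at $S^{-1}AS$, becomes $S_m^{-1}A_mS_m-a_{ll}I_m=S_m^{-1}(A_m-a_{ll}I_m)S_m$ because $a_{ll}$ is preserved; the conjugations then telescope across the product. Reading off the $(1,k)$ entry gives $g_{1k}^{(m,r)}\{S^{-1}AS\}=(s_k/s_1)\,g_{1k}^{(m,r)}\{A\}$, a nonzero multiple. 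A direct computation handles $\varphi_n$: the diagonal differences $x_{ii}-x_{jj}$ are unchanged and the superdiagonal product $x_{12}\cdots x_{n-1,n}$ scales by $(s_2/s_1)(s_3/s_2)\cdots(s_n/s_{n-1})=s_n/s_1$, so $\varphi_n\{S^{-1}AS\}=(s_n/s_1)\,\varphi_n\{A\}$.

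Multiplying these together according to \eqref{gun}, I obtain $f_n\{S^{-1}AS\}=c(S)\,f_n\{A\}$, where $c(S)$ is a product of factors of the form $s_k/s_1$ and $s_n/s_1$, hence nonzero. Therefore $S^{-1}AS\in M_n(f_n)$. The only real content is the conjugation identity for $G^{(m,r)}$; the point to be careful about is that $g_{1k}^{(m,r)}\{A\}$ depends only on the principal $m\times m$ block of $A$ (so that passing to $S_m$ and $A_m$ is legitimate) and that the shifts use the unchanged diagonal entries $a_{ll}$.
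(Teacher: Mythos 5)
Your proposal is correct and follows essentially the same route as the paper: both rest on the observation that diagonal conjugation preserves the diagonal entries, hence $G^{(m,r)}\{S^{-1}AS\}=S_m^{-1}G^{(m,r)}\{A\}S_m$, so each factor of $f_n$ is scaled by a nonzero constant. Your version is slightly more explicit than the paper's (you compute the multiplier $c(S)$ and spell out the reduction to principal submatrices for the factors $g_{1k}^{(m,r)}$ with $m<n$), but the substance is identical.
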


\begin{proof}
Let $A\in M_n(f_n)$ and let $S$ be a nonsingular diagonal matrix.
For each $i$, the $(i,i)$ entries of $A$ and $S^{-1}AS$ coincide and $S^{-1}AS-a_{ii}I_n=S^{-1}(A-a_{ii}I_n)S$. Thus, $G^{(n,r)}\{S^{-1}AS\}= S^{-1}G^{(n,r)}\{A\}S$ for each $r$, and so the corresponding entries of $G^{(n,r)}\{A\}$ and $G^{(n,r)}\{S^{-1}AS\}$ are simultaneously zero or nonzero. Taking into account the definition \eqref{gun} of $f_n$, we get $S^{-1}AS\in M_n(f_n)$.
\end{proof}

The following lemma is analogous to Lemma \ref{ajs}.

\begin{lemma}\label{ajs1}
Each matrix $A\in M_n(f_n)$, in which all entries of the first superdiagonal are positive real numbers,
is fully determined by the indexed family of real numbers
\begin{equation}\label{gdm1}
 \{\|h({A}_k)\|\}_{(h,k)},\qquad
\text{in which $h\in\mathbb C[x]$ and
$k=1,\dots,n.$}
\end{equation}
\end{lemma}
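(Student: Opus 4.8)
The plan is to mirror the proof of Lemma~\ref{ajs}, recovering $A$ from the family \eqref{gdm1} in three stages: the main diagonal, then (by induction on $n$) the submatrix $A_{n-1}$, and finally the last column. First I would recover $\lambda_1,\dots,\lambda_n$: for each $k$ a nonzero polynomial $h$ of least degree with $\|h(A_k)\|=0$ must satisfy $h(A_k)=0$, and since $\varphi_n\{A\}\neq 0$ forces $A_k$ to have the distinct eigenvalues $\lambda_1,\dots,\lambda_k$, this $h$ is $\prod_{i=1}^k(x-\lambda_i)$; comparing the root sets for $k$ and $k-1$ singles out the new root $\lambda_k$. I would then argue by induction on $n$, the key preliminary being that $A_{n-1}\in M_{n-1}(f_{n-1})$: by the nesting of the polynomials \eqref{kut}--\eqref{gun} and Lemma~\ref{lhs}(c), every factor of $f_{n-1}\{A_{n-1}\}$ is also a factor of $f_n\{A\}\neq 0$. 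Since $A_{n-1}$ again has positive real first superdiagonal, the induction hypothesis determines $A_{n-1}$ from the subfamily $\{\|h(A_k)\|\}_{k\le n-1}$.

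It remains to determine the last column $v:=(a_{1n},\dots,a_{n-1,n})^{T}$, in which $a_{n-1,n}>0$. Writing $A_n$ in block form with diagonal blocks $A_{n-1}$ and $\lambda_n$ and off-diagonal block $v$, a short computation shows that for every $h\in\mathbb C[x]$ the strictly upper part of the last column of $h(A_n)$ equals $g(A_{n-1})v$, where $g(x):=(h(x)-h(\lambda_n))/(x-\lambda_n)$ is the corresponding difference quotient. Hence $\|h(A_n)\|^2=\|h(A_{n-1})\|^2+\|g(A_{n-1})v\|^2+|h(\lambda_n)|^2$. As $A_{n-1}$ and the $\lambda_i$ are already known and $\|h(A_{n-1})\|,\|h(A_n)\|$ lie in \eqref{gdm1}, this determines $\|g(A_{n-1})v\|^2$ for every $g\in\mathbb C[x]$ (the assignment $h\mapsto g$ is onto $\mathbb C[x]$, since any $g$ arises from $h(x)=(x-\lambda_n)g(x)$). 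The problem thus reduces to recovering $v$ from the numbers $\|g(N)v\|^2$, $g\in\mathbb C[x]$, where $N:=A_{n-1}$ is a known matrix with distinct eigenvalues.

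To recover $v$ I would diagonalize $N$: let $e_1,\dots,e_{n-1}$ be its eigenvectors (with $e_i$ supported on the first $i$ coordinates and $(e_i)_i\neq 0$) and write $v=\sum_i\beta_i e_i$, so that $g(N)v=\sum_i g(\lambda_i)\beta_i e_i$. Since the $\lambda_i$ are distinct, the tuple $(g(\lambda_1),\dots,g(\lambda_{n-1}))$ ranges freely over $\mathbb C^{\,n-1}$, so the known quadratic function $g\mapsto\|g(N)v\|^2$ determines the entire Hermitian matrix $K_{ij}:=\beta_i\bar\beta_j\langle e_i,e_j\rangle$. Its diagonal yields each $|\beta_i|$, and an off-diagonal entry $K_{ij}$ yields the relative phase $\beta_i\bar\beta_j$ whenever $\langle e_i,e_j\rangle\neq 0$. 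This is where the factors of $f_n$ enter decisively: using Lemma~\ref{lhs} together with the spectral-projection identity $G^{(j,j)}\{A_j\}=\prod_{i<j}(\lambda_j-\lambda_i)\,P_j$, I would show that $g_{1j}^{(j,j)}\neq 0$ is equivalent to $(e_j)_1\neq 0$, hence (as $\langle e_1,e_j\rangle=(e_1)_1\overline{(e_j)_1}$) to $\langle e_1,e_j\rangle\neq 0$; and that $g_{1n}^{(n,1)}\neq 0$ is equivalent to $\beta_1\neq 0$. Consequently $e_1$ is non-orthogonal to every $e_j$ and $\beta_1\neq 0$, so all $\beta_j$ are determined up to one common unimodular factor, which is then fixed by $a_{n-1,n}=\beta_{n-1}(e_{n-1})_{n-1}>0$.

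The main obstacle is precisely this phase recovery: the data $\|g(N)v\|^2$ see $v$ only through the quadratic form $K$, which a priori pins down the eigen-coordinates $\beta_i$ only up to independent phases on each ``orthogonality class'' of eigenvectors. Overcoming this is exactly the purpose of the two families of factors in $f_n$: the factors $g_{1j}^{(j,j)}$ make $e_1$ a common non-orthogonal ``hub'' for all eigenvectors, while $g_{1n}^{(n,1)}$ guarantees $\beta_1\neq 0$ so that this hub actually links all the phases, after which positivity of $a_{n-1,n}$ removes the remaining global phase. I expect the two equivalences $g_{1j}^{(j,j)}\neq 0\Leftrightarrow(e_j)_1\neq 0$ and $g_{1n}^{(n,1)}\neq 0\Leftrightarrow\beta_1\neq 0$, and the separate verification of the small cases $n\le 3$ (where $f_n=\varphi_n$ carries no $g_{1n}^{(n,1)}$ factor but $a_{n-1,n}>0$ still forces the relevant coordinate to be nonzero), to be the most delicate points of the write-up.
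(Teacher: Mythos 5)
Your proposal is correct, and it shares the paper's overall skeleton (recover the diagonal from minimal polynomials, induct on $n$ via $f_{n-1}\mid f_n$, get $a_{n-1,n}$ from a norm, then recover the rest of the last column using polarization and the nonvanishing of the factors of $f_n$), but the engine for the last column is genuinely different. The paper stays in the original coordinates: it forms $B^{(r)}=G^{(n,r)}\{A\}(A-\lambda_n I_n)$ and $C=G^{(n,1)}\{A\}$, uses the polarization identity to compute the inner products $(B^{(r)},C)$, and extracts the entries $a_{rn}$ one at a time by a downward induction on $r$, with Lemma~\ref{lhs} supplying the zero patterns that make each extraction a one-unknown linear equation. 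You instead pass to the eigenbasis of $N=A_{n-1}$: the identity $\|h(A)\|^2=\|h(N)\|^2+\|g(N)v\|^2+|h(\lambda_n)|^2$ with $g$ the difference quotient reduces everything to the Hermitian matrix $K_{ij}=\beta_i\bar\beta_j\langle e_i,e_j\rangle$, which is determined by Lagrange interpolation plus polarization, and the factors of $f_n$ are reinterpreted spectrally ($g_{1j}^{(j,j)}\{A\}\neq 0\Leftrightarrow (e_j)_1\neq 0$ via $G^{(j,j)}\{A_j\}=\prod_{i<j}(\lambda_j-\lambda_i)P_j$, and $g_{1n}^{(n,1)}\{A\}\neq 0\Leftrightarrow\beta_1\neq 0$) so that $e_1$ becomes a hub linking all phases, the last one being fixed by $a_{n-1,n}>0$. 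I checked the two equivalences you flag as delicate and they do hold, as does the $n=3$ case (there $\beta_2\neq 0$ is forced by $a_{23}>0$, $\beta_2$ is pinned down outright by $a_{23}=\beta_2(\lambda_2-\lambda_1)>0$, and $\beta_1$ then follows from $K_{12}$ since $\langle e_1,e_2\rangle=a_{12}\neq 0$). What your route buys is a conceptual explanation of \emph{why} the two families of factors $g_{1j}^{(j,j)}$ and $g_{1n}^{(n,1)}$ appear in \eqref{gun}; what the paper's route buys is that it never needs to discuss eigenvectors, orthogonality classes, or phase ambiguities at all, each step being a single linear equation with one unknown and a visibly nonzero coefficient. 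If you write yours up, the only points needing full detail are exactly the ones you identified, plus the (routine) normalization of the spectral projections $P_j=e_jf_j^T/(f_j^Te_j)$ used in the equivalence $g_{1j}^{(j,j)}\{A\}\neq 0\Leftrightarrow(e_j)_1\neq 0$.
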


\begin{proof}
The matrix $A$ has the form
\begin{equation}\label{jys8}
\begin{bmatrix}
     \lambda_{1} &a_{12}&\dots&a_{1n} \\
     &\lambda _2&\ddots&\vdots \\
     &&\ddots&a_{n-1,n}\\ 0&&&\lambda _{n}
   \end{bmatrix},\qquad \begin{matrix}
     \lambda _i\ne \lambda _j\text{ if }i\ne j,\\
     \text{all $a_{i,i+1}$ are positive real.}
   \end{matrix}
\end{equation}

For each $k$, the minimal polynomial $\mu _k(x)$ of
$A_k$  is determined by the family \eqref{gdm1}. Since
$\lambda _i\ne \lambda _j$ if
$i\ne j,$ $\mu _k(x)$ is the
characteristic polynomial of $A_k$, and so $\mu _k(x)/\mu
_{k-1}(x)=x- \lambda _k$. Thus, the main
diagonal of $A$ is determined by
\eqref{gdm1}.

The entry $a_{12}$ of
$A$ is also determined by
\eqref{gdm1} since $a_{12}$ is a positive real
number, $\|A_2\|$ is determined by
\eqref{gdm1}, and
\[\|A_2\|^2=|\lambda_1|^2
+|\lambda_2|^2+a_{12}^2.\]
This proves Lemma \ref{ajs1} for $n=2$.

Let $n\ge 3$. Since $f_{n-1}$ divides $f_n$, we can use induction on $n$, and so we
assume that \eqref{gdm1} determines  $A_{n-1}$.

Let us find $a_{n-1,n}$. For each $r=2,3,\dots,n-1$,
define the $n\times n$ matrix
\begin{equation*}\label{sym}
B^{(r)}=[b_{ij}^{(r)}]:=G^{(n,r)}\{A\}(A-\lambda _nI_n)=(A-\lambda _1I_n)
 \cdots
 (A-\lambda _{r-1}I_n)(A-\lambda _nI_n).
\end{equation*}
Since $G^{(n,n-1)}\{A\}$ has the form \eqref{hiw} and the $(n,n)$ entry of $A-\lambda _nI_n$ is zero, the last column of $B^{(n-1)}$ is $va_{n-1,n}$, in which
\begin{equation}\label{fsy}
v:=[g_{1,n-1}^{(n,n-1)}\{A\},\dots,
g_{n-1,n-1}^{(n,n-1)}\{A\},0]^T
\end{equation}
is the $(n-1)$th column of $G^{(n,n-1)}\{A\}$. The column $v$ is known since it is determined by
$A_{n-1}$. The first coordinate of $v$ is nonzero since by Lemma \ref{lhs}(c)
\begin{equation}\label{cws}
g_{1r}^{(n,r)}\{A\}= g_{1r}^{(r,r)}\{A\}\ne 0,\qquad r=2,\dots, n-1;
\end{equation}
these are nonzero since each $g_{1r}^{(r,r)}$ divides $f_n$ (note that $g_{12}^{(2,2)}=x_{12}$).

Thus, $\|v\|\ne 0$.
The positive real
number
$a_{n-1,n}$ is fully determined by
the equality
\[
\|B^{(n-1)}\|^2=
\|B_{n-1}^{(n-1)}\|^2
+\|v\|^2 a_{n-1,n}^2,
\]
in which $B_{n-1}^{(n-1)}$ is the
$(n-1)$-by-$(n-1)$ principal
submatrix of $B^{(n-1)}$.

We have
determined the matrix $B^{(n-1)}$ too since
its last column is $va_{n-1,n}$.

Let us
consider the space $\mathbb C^{n\times n}$ of $n$-by-$n$
matrices as the unitary space with scalar product
\[
(X,Y):=\sum_{i,j} x_{ij}\bar
y_{ij},\qquad X=[x_{ij}],\ Y=[y_{ij}]\in \mathbb C^{n\times n}.
\]
This scalar product is expressed
via the Frobenius norm due to the
polarization identity
\begin{equation*}\label{fel}
(X,Y)=\frac14(\|X+Y\|^2-\|X-Y\|^2)
+\frac i4(\|X+iY\|^2- \|X-iY\|^2).
\end{equation*}

By Lemma \ref{lhs}(a),
\begin{equation*}\label{lge}
C:=G^{(n,1)}\{A\}= (A-\lambda _2I_n)\cdots
 (A-\lambda _nI_n)=\begin{bmatrix}
     c_{1} &c_{2}&\dots&c_{n} \\
     &0&\dots&0 \\
     &&\ddots&\vdots\\ 0&&&0
   \end{bmatrix},
\end{equation*}
in which $c_1,\dots,c_{n-1}$ are known. Since the main diagonal of $A$ is determined by \eqref{gdm1}, $\|C\|$ is determined by \eqref{gdm1} too. Using the polarization identity, we find $(B^{(n-1)},C)$. Then we find $c_n$ from the equality
\begin{equation*}\label{rzo}
(B^{(n-1)},C) =b^{(n-1)}_{11}\bar
c_1+ \cdots +b^{(n-1)}_{1,n-1}\bar
c_{n-1}+b^{(n-1)}_{1n}\bar c_n,
\end{equation*}
in which
$b^{(n-1)}_{1n}=g_{1,n-1}^{(n,n-1)}\{A\}a_{n-1,n}
\ne 0$ (see \eqref{fsy} and \eqref{cws}).

Reasoning by induction, we assume
that $a_{n-1,n},a_{n-2,n},\dots,
a_{r+1,n}$ are known and find $a_{rn}$ for each $r\le n-2$.

Suppose first that $r\ge 2$. Then $n\ge 4$, and $c_n\ne 0$ because $g_{1n}^{(n,1)}$ divides $f_n$.
Since $\|B^{(r)}\|$ is determined by \eqref{gdm1} and $C$ is known, we determine $(B^{(r)},C)$ using the
polarization identity. We determine
$b^{(r)}_{1n}$
from
\begin{equation*}\label{rzo2}
(B^{(r)},C) =b^{(r)}_{11}\bar c_1+
\cdots +b^{(r)}_{1,n-1}\bar
c_{n-1}+b^{(r)}_{1n}\bar c_n.
\end{equation*}
By \eqref{hiw}, the first $r-1$ columns of $G^{(n,r)}$ are zero, and so
\[b^{(r)}_{1n}
=g_{1r}^{(n,r)}\{A\}a_{rn}
+g_{1,r+1}^{(n,r)}\{A\}a_{r+1,n}+\dots
+g_{1,n-1}^{(n,r)}\{A\}a_{n-1,n}.\] This
equality determines $a_{rn}$ because only $a_{rn}$ is unknown and
$g_{1r}^{(n,r)}\{A\}\ne 0$ by \eqref{cws}.

Suppose now that $r=1$.
Write $C$ in the form $D(A-\lambda_nI_n)$, in which
\[
 D=[d_{ij}]:=(A-\lambda_2I_n)
 (A-\lambda_3I_n)\cdots
 (A-\lambda_{n-1}I_n).
\]
Then
\[
c_n=d_{11}a_{1n}+d_{12}a_{2n}+\dots+ d_{1,n-1}a_{n-1,n}.
\]
This equality determines $a_{1n}$ since
only $a_{1n}$ is unknown and
\[
d_{11}=(\lambda _1-\lambda _2)(\lambda _1-\lambda _3)\cdots(\lambda _1-\lambda _{n-1})\ne 0.
\]

Therefore, we have determined all entries of $A$.
\end{proof}

\begin{proof}[{Proof of Theorem
\ref{thms}}] \label{kay}
By Lemmas \ref{oiy}(a) and \ref{ajs2}, each matrix $A\in M_n(f_n)$ is unitarily similar to a
matrix $A'\in M_n(f_n)$ of the form \eqref{jys8} via a diagonal unitary matrix.
Then $\|h(A_k)\|=\|h(A'_k)\|$ for all
$h\in\mathbb C[x]$ and
$k=1,\dots,n$. Thus, it suffices to prove Theorem
\ref{thms} for matrices $A,B\in M_n(f_n)$ of the form  \eqref{jys8}.

``$\Rightarrow$'' Let $A,B\in M_n(f_n)$ of the form  \eqref{jys8} be unitarily similar and have the same main diagonal. By Lemma \ref{oiy}(b), $A=B$ and so \eqref{blh} holds.

``$\Leftarrow$'' Let $A,B\in M_n(f_n)$ of the form  \eqref{jys8} satisfy \eqref{blh}. By Lemma \ref{ajs1}, $A=B$ since their indexed families  $\{\|h({A}_k)\|\}_{(h,k)}$ and
$\{\|h({B}_k)\|\}_{(h,k)}$ coincide.
\end{proof}

\end{document}